\documentclass[reqno,12pt]{article}

\usepackage{a4wide}
\usepackage{amsmath} 
\usepackage{amssymb}
\usepackage{amsthm}
\usepackage[utf8]{inputenc} 
\usepackage{graphicx} 
\usepackage[english, russian]{babel}
\usepackage{xcolor}

\numberwithin{equation}{section}

\newtheorem{theo}{Theorem}
\newtheorem*{theo*}{Theorem}
\newtheorem{conj}{Conjecture}

\newtheorem{lem}{Lemma}
\newtheorem{defi}{Definition}
\newtheorem*{defi*}{Definition}

\theoremstyle{remark}

\newcommand{\Qbar}{\overline{\mathbb Q}}

\begin{document}

 \selectlanguage{english}

\title{On primary pseudo-polynomials (Around Ruzsa's Conjecture)}
\date\today
\author{\'E. Delaygue and T. Rivoal}
\maketitle

\begin{abstract} Every polynomial $P(X)\in \mathbb Z[X]$ satisfies the congruences $P(n+m)\equiv P(n) \mod m$ for all  integers $n, m\ge 0$. An integer valued sequence $(a_n)_{n\ge 0}$ is called a pseudo-polynomial when 
it satisfies these congruences. Hall characterized pseudo-polynomials and 
proved that they are not necessarily polynomials.  
A long standing conjecture of Ruzsa says that a pseudo-polynomial $a_n$ is a polynomial as soon as $\limsup_n \vert a_n\vert^{1/n}<e$. Under this growth assumption, Perelli and Zannier proved that the generating series $\sum_{n=0}^\infty a_n z^n$ is a $G$-function. A primary pseudo-polynomial  is an integer valued sequence $(a_n)_{n\ge 0}$ such that $a_{n+p}\equiv a_n \mod p$ for all integers $n\ge 0$ and all prime numbers $p$. The same conjecture has been formulated for them, which implies Ruzsa's, and this paper revolves around this conjecture. We obtain a Hall type characterization of primary pseudo-polynomials and draw various consequences from it. We give a new proof and generalize a result due to Zannier that any primary pseudo-polynomial with an algebraic generating series is a polynomial. This leads us to formulate a conjecture on diagonals of rational fractions and primary pseudo-polynomials, which is related to classic conjectures of Christol and van der Poorten. We make the Perelli-Zannier Theorem 
effective. We prove a P\'olya type result: if there exists a function $F$ analytic in a right-half plane  with not too large exponential growth (in a precise sense) and such that for all large $n$ 
the primary pseudo-polynomial $a_n=F(n)$, then $a_n$ is a polynomial.
Finally, 
we show how to construct a non-polynomial primary pseudo-polynomial starting from any primary pseudo-polynomial generated by a $G$-function different of $1/(1-x)$.
\end{abstract}

\section{Introduction}

Let $P(X)$ be a polynomial in $\mathbb{Z}[X]$. For all distinct integers $m$ and $n$, the integer $m-n$ divides $P(m)-P(n)$. Equivalently, for all integers $n$ and $k$ with $k\neq 0$, we have the congruence $P(n+k)\equiv P(n)\mod k$. 

In the sense of Hall \cite{hall}, a sequence $(a_n)_{n\ge 0}\in \mathbb Z^{\mathbb N}$  is said to be a {\em pseudo-polynomial} when the following property holds: for any integers $n\geq 0$ and $k\geq 1$, we have $a_{n+k}\equiv a_n \mod k$.

Note that an integer-valued polynomial is not necessarily a pseudo-polynomial as $X(X+1)/2$ shows, see below. Pseudo-polynomials have long been studied for themselves, but they have also found recent applications in analytic number theory \cite{kow, kup}.
\medskip

For every non-negative integer $k$, we consider the polynomial
$$
P_k(X)=\frac{X(X-1)\cdots(X-k+1)}{k!},
$$
and $P_0(X):=1$, whose integer values are binomial coefficients. It is well known \cite[Problem 85]{polya} that integer valued polynomials are $\mathbb Z$-linear combinations of the $P_k$'s. It turns out that those polynomials also lead to a characterization of pseudo-polynomials. Hall proved in \cite{hall} that a sequence $(a_n)_{n\geq 0}$ is a pseudo-polynomial if and only if there exists a sequence of integers $(b_n)_{n\geq 0}$ such that, for every positive integer $n$, $b_n$ is a multiple of $d_n:=\mathrm{lcm}\{1,2,\dots,n\}$ (with $d_0:=1$) and we have
\begin{equation}\label{eq:ajout1507}
a_n=\sum_{k=0}^\infty b_kP_k(n).
\end{equation}
(For each given $n\ge 0$, the sum is finite, more precisely it runs from $k=0$ to $k=n$.)
Given a sequence $(a_n)_{n\ge 0}\in \mathbb C^{\mathbb N}$, we define its 
\textit{binomial transform} $(b_n)_{n\ge 0}\in \mathbb C^{\mathbb N}$ as  

\begin{equation}\label{eq:binomialtransform}
\forall n\ge 0, \quad b_n:=\sum_{k=0}^n (-1)^{n-k} \binom{n}{k}a_k.
\end{equation}
It is well-known that $(a_n)_{n\ge 0}$ can be recovered from $(b_n)_{n\ge 
0}$ by 
\begin{equation}\label{eq:involution}
\forall n \ge 0, \quad a_n=\sum_{k=0}^n \binom{n}{k}b_k, 
\end{equation}
\textit{i.e.} the binomial transform is ``almost'' involutive.  Hence Hall proved that a sequence $(a_n)_{n\geq 0}$ is a pseudo-polynomial if and only if its binomial transform $(b_n)_{n\geq 0}$ satisfies
\begin{equation} \label{eq:hall} \forall n\ge 0, \; d_n\,\vert\, b_n.
\end{equation}
Because of \eqref{eq:binomialtransform} and \eqref{eq:involution}, observe that in \eqref{eq:ajout1507} the sequence $(b_n)_{n\ge 0}$ is uniquely determined by the sequence $(a_n)_{n\ge 0}$, so that as claimed above  $\frac{1}{2}X(X+1)=P_1(X)+P_2(X)$ is not a pseudo-polynomial because $d_2=2$.

An important property of the binomial transform is that they also lead to a characterization of polynomials. Let $(a_n)_{n\geq 0}$ be a complex sequence and $(b_n)_{n\geq 0}$ its binomial transform. Then the following assertions are equivalent:
\begin{itemize}
\item there exists $P(X)\in \mathbb C[X]$ such that $a_n=P(n)$ for all integers $n\geq 0$;
\item $b_n=0$ for all $n$ large enough.
\end{itemize}
(We can replace $\mathbb C[X]$ by $\mathbb Q[X]$ when the sequences take values in $\mathbb Q$.)

{\em In the sequel, we shall often say that an integer valued sequence $(a_n)_{n \ge 0}$ is ``a polynomial" when there exists a polynomial 
$P(X)\in \mathbb{Q}[X]$ such that $a_n=P(n)$ for all non-negative integers $n$. }
\medskip

We recall that, for all non-negative integers $n$, we have
$$
d_n=\prod_{p\le n} p^{\lfloor\log_p(n)\rfloor} \le 3^n\quad\textup{and}\quad d_n^{1/n}\underset{n\to +\infty}{\longrightarrow} e,
$$
by the Prime Number Theorem. Taking $b_n:=n!$ in \eqref{eq:hall}, we see from \eqref{eq:involution} that  the resulting pseudo-polynomial  $a_n$ is 
simply equal to $\lfloor n!e\rfloor$ for $n\ge 1$,  
which is obviously not a polynomial. With $b_n:=d_n$ in \eqref{eq:hall}, we obtain from \eqref{eq:involution} another pseudo-polynomial of slower growth $a_n:=\sum_{k=0}^n \binom{n}{k}d_k \le 4^n$; since $a_n\ge 2^n$ for all $n\ge 0$, this is not a polynomial either.

Following Hall, we say that a non-polynomial pseudo-polynomial is a \textit{genuine} pseudo-polynomial. The search of minimal growth conditions that can be attained by genuine pseudo-polynomials has been the subject of many papers. Hall \cite{hall} and Ruzsa \cite{ruzsa} independently proved  that if 
\begin{equation}\label{eq:hallruzsa}
\limsup_{n\to +\infty} \vert a_n\vert^{1/n} < e-1,
\end{equation}
then $(a_n)_{n\geq 0}$ is a polynomial. Using his characterization \eqref{eq:hall}, Hall~\cite[p.~76]{hall} sketched an inductive construction of a genuine pseudo-polynomial $(a_n)_{n\ge 0}$ such that
$$
\limsup_{n\to +\infty} \vert a_n\vert^{1/n} \le e. 
$$
Ruzsa proposed the following conjecture.

\begin{conj}[Ruzsa]\label{conj:ruzsa}
Let $(a_n)_{n\ge 0}\in \mathbb Z^{\mathbb N}$ be a pseudo-polynomial such 
that 
\begin{equation}\label{eq:ruzsa}
\limsup_{n\to +\infty} \vert a_n\vert^{1/n} < e.
\end{equation}
Then $(a_n)_{n\geq 0}$ is a polynomial.
\end{conj}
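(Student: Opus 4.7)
My plan is to work with the binomial transform $(b_n)_{n \ge 0}$ of $(a_n)_{n \ge 0}$ from \eqref{eq:binomialtransform}. By Hall's divisibility criterion \eqref{eq:hall}, the pseudo-polynomial hypothesis forces $d_n \mid b_n$, so either $b_n = 0$ or $|b_n| \ge d_n$; since $d_n^{1/n} \to e$, proving that $(a_n)$ is a polynomial reduces to producing an upper bound on $|b_n|$ which grows strictly slower than $e^n$ for $n$ large, as this will force $b_n = 0$ eventually (the polynomial criterion recalled between \eqref{eq:hall} and \eqref{eq:hallruzsa}). The hypothesis \eqref{eq:ruzsa} yields $|a_n| \le C \rho^n$ with $\rho < e$, but the triangle inequality only gives $|b_n| \le C(1+\rho)^n$; this beats $d_n$ precisely when $\rho < e-1$, which is exactly the Hall--Ruzsa theorem \eqref{eq:hallruzsa}. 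The whole difficulty is to sharpen this bound by exploiting cancellation in the alternating sum defining $b_n$.

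The natural way to reveal such cancellation is analytic. The generating function $A(z) = \sum_{n \ge 0} a_n z^n$ is holomorphic on a disk of radius strictly larger than $1/e$, and a direct manipulation of \eqref{eq:involution} shows that its relation to $B(w) = \sum_{n \ge 0} b_n w^n$ is $B(w) = \frac{1}{1+w}\, A\!\bigl(\frac{w}{1+w}\bigr)$; so $B$ extends holomorphically to the Möbius image of $\{|z| < 1/\rho\}$, and a Cauchy integral on a well-chosen contour yields $|b_n| \le C' r^{-n}$ for a certain $r > 0$. A purely size-based choice of $r$ is insufficient, so I would invoke the Perelli--Zannier theorem: under \eqref{eq:ruzsa}, $A(z)$ is a $G$-function. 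I would then combine the Chudnovsky--André--Bombieri denominator bounds for Taylor coefficients of $G$-functions with the divisibility $d_n \mid b_n$ inherited from Hall's criterion, with the aim of forcing $A(z)$ to be algebraic. Once algebraicity is secured, the generalization of Zannier's result announced in the abstract would finish the argument, since an algebraic primary pseudo-polynomial is a polynomial.

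The main obstacle lies squarely in the second step. The reason the conjecture has resisted for so long is that the naive analytic bound misses the target by a factor of essentially $e^n$, and the $p$-adic congruences of a pseudo-polynomial, although very restrictive in appearance, have so far not been shown to supply the missing cancellation. Ultimately the problem is to rule out a transcendental $G$-function whose coefficient sequence is a genuine pseudo-polynomial, a question that sits very close to the Christol and van der Poorten conjectures to which the authors explicitly connect their work. I would therefore expect any successful attack on Ruzsa's conjecture to require either new denominator estimates for $G$-functions whose coefficients are pseudo-polynomial, or an a priori differential-equation input that confines $A(z)$ to the algebraic case before the archimedean--non-archimedean balance is made to bite.
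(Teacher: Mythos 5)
The statement you were asked to prove is Ruzsa's Conjecture, which remains open; the paper labels it explicitly as a conjecture and offers no proof of it, so there is nothing in the paper against which your argument could be matched. What the paper proves instead are partial results \emph{around} the conjecture (the Hall-type characterization of primary pseudo-polynomials, the Perelli--Zannier $G$-function theorem, the algebraic case, the effective recurrence bounds, the P\'olya-type theorem).

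Your sketch correctly assembles the relevant toolkit --- Hall's divisibility criterion $d_n \mid b_n$, the M\"obius relation $f_b(x)=\frac{1}{1+x}f_a\bigl(\frac{x}{1+x}\bigr)$, the fact that the naive triangle-inequality bound on $b_n$ only beats $d_n$ when $\limsup|a_n|^{1/n}<e-1$, and the Perelli--Zannier theorem turning $f_a$ into a $G$-function --- and correctly identifies where the real obstruction sits. But it is not a proof: the crucial step, where you ``would then combine the Chudnovsky--Andr\'e--Bombieri denominator bounds for Taylor coefficients of $G$-functions with the divisibility $d_n\mid b_n$ \ldots with the aim of forcing $A(z)$ to be algebraic,'' is pure aspiration. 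No such combination is known to deliver algebraicity, and you yourself concede in the final paragraph that ``the $p$-adic congruences \ldots have so far not been shown to supply the missing cancellation.'' That candid admission is accurate: the step you label as the second is precisely the content of the open problem, and the present paper's contribution is to reformulate it (via Conjecture~\ref{conj diag} on diagonals, Christol's conjecture, and van der Poorten's conjecture), not to resolve it. In short, your write-up is a competent survey of why Ruzsa's conjecture is hard, but as a proof attempt it stops exactly at the point where a proof would have to begin.
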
 

In fact, many results towards Ruzsa's conjecture have been proven for sequences we shall call {\em primary pseudo-polynomial} (for lack of better terminology). 

\begin{defi}
A sequence $(a_n)_{n\ge 0}\in \mathbb Z^{\mathbb N}$ is said to be a primary pseudo-polynomial when the following property holds: for any integer $n\ge 0$ and any prime number $p$, $a_{n+p}\equiv a_n \mod p$. 
\end{defi}

The set of primary pseudo-polynomials is a ring for the term-wise sum 
and product of sequences in $\mathbb Z^{\mathbb N}$, with the null and unit sequences  defined with all  terms equal to $0$ and all terms equal to 
$1$  respectively. 
A pseudo-polynomial is a primary pseudo-polynomial but the converse is false (see the comments following Theorem \ref{theo:1} below).  
Many authors delt with the following conjecture, the truth of which would 
imply that of Ruzsa. 
\begin{conj}\label{conj:ruzsaext}
Let $(a_n)_{n\ge 0}\in \mathbb Z^{\mathbb N}$ be a primary pseudo-polynomial such that 
\begin{equation}\label{eq:ruzsaext}
\limsup_{n\to +\infty} \vert a_n\vert^{1/n} < e.
\end{equation}
Then $(a_n)_{n\geq 0}$ is a polynomial.
\end{conj}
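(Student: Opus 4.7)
The plan is to establish a Hall-type characterization of primary pseudo-polynomials as a first step. Since the defining condition $a_{n+p}\equiv a_n\pmod p$ amounts to periodicity modulo each prime, one expects that a sequence $(a_n)_{n\ge 0}$ is a primary pseudo-polynomial if and only if its binomial transform $(b_n)_{n\ge 0}$ satisfies a divisibility of the form $\mathrm{rad}(d_n)\mid b_n$, where $\mathrm{rad}(d_n):=\prod_{p\le n,\,p\text{ prime}} p$ is the radical of $d_n$. By Chebyshev's theorem, $\mathrm{rad}(d_n)^{1/n}\to e$, so this divisibility still matches the critical exponent $e$ appearing in the growth hypothesis \eqref{eq:ruzsaext}. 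The necessity should come from Fermat's little theorem applied to each $P_k(n)$; the sufficiency would follow by inverting the binomial transform and checking $a_{n+p}\equiv a_n\pmod p$ term by term.

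Next, under \eqref{eq:ruzsaext}, one would adapt the Perelli--Zannier argument to the primary pseudo-polynomial setting to conclude that $f(z):=\sum_{n\ge 0} a_n z^n$ is a $G$-function. By Andr\'e's theorem, $f$ is then globally nilpotent, satisfies a linear ODE with polynomial coefficients, and $(a_n)_{n\ge 0}$ satisfies a linear recurrence with polynomial coefficients. This already provides a strong structural constraint reminiscent of the Chudnovsky theorem on denominators of $G$-function coefficients.

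The third and crucial step would exploit mod $p$ rigidity: for each prime $p$, the primary pseudo-polynomial condition says that $(a_n\bmod p)_{n\ge 0}$ is $p$-periodic, so the reduction of $f$ mod $p$ is a rational function with denominator dividing $1-z^p$. Combining this with the holonomic structure from step two and the connection with diagonals of rational functions in the paper's companion conjecture, one would aim to force $f(z)$ into $\mathbb{Q}(z)$ and specifically into the subspace $\bigoplus_{k=0}^N \mathbb{Q}\cdot z^k(1-z)^{-k-1}$, which is exactly the set of generating series of polynomial sequences.

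The main obstacle is the critical value $e$: Hall's construction exhibits genuine primary pseudo-polynomials with $\limsup|a_n|^{1/n}\le e$, and Hall--Ruzsa already settle the easier threshold $e-1$. Closing the gap at the exact critical exponent, where the Hall-type divisibility bound on $(b_n)_{n\ge 0}$ becomes tight and leaves no $\varepsilon$ of slack, requires using the congruence information at infinitely many primes simultaneously rather than one prime at a time, since any naive bound $|b_n|\le 2^n\max_{k\le n}|a_k|$ loses a factor $2^n$ that swamps the $e^{-n}$ saving coming from the divisibility. This synergy, as the paper's abstract emphasizes, is precisely what ties the conjecture to Christol's and van der Poorten's classical open problems on $G$-functions and diagonals, and it is the reason the full conjecture remains out of reach at present.
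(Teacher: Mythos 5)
The statement you were asked to prove is Conjecture~\ref{conj:ruzsaext}, which the paper explicitly presents as an \emph{open} conjecture: it is not proved anywhere in the paper, and indeed (strengthening Ruzsa's Conjecture~\ref{conj:ruzsa}) it remains a well-known open problem. Your write-up is a strategy outline rather than a proof, and you acknowledge this yourself in the final paragraph when you say the conjecture ``remains out of reach at present.'' So there is no question of comparing your argument to ``the paper's proof'': there is none to compare against.

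That said, the ingredients you list do line up with what the paper actually establishes as partial results and context. Your first step (a Hall-type divisibility criterion $\prod_{p\le n}p\mid b_n$ on the binomial transform) is exactly Theorem~\ref{theo:1}$(i)$. Your second step (that under the growth hypothesis $f_a$ is a $G$-function satisfying a linear recurrence with polynomial coefficients) is the Perelli--Zannier theorem, which the paper makes effective in Theorem~\ref{theo:recurrence}. Your third step is where the real gap lives: knowing that $f_a\bmod p$ is a rational function with denominator dividing $1-z^p$ for every prime $p$, plus $D$-finiteness, is not presently known to force $f_a\in\mathbb{Q}(z)$. The paper records precisely this obstruction by reformulating it as Conjecture~\ref{conj diag} and observing that it would follow from Christol's and van der Poorten's conjectures, both open. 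Your diagnosis of the difficulty (the binomial transform loses a factor $2^n$ that swamps the $e^{-n}$ divisibility saving) is also correct, and it matches the paper's remark that the bound in Theorem~\ref{theo:1}$(iii)$ recovers only $e-1$. In short: your outline correctly locates where the difficulty is, but it does not overcome it, and neither does the paper.
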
 
As in the case of pseudo-polynomials, $<e$ cannot be replaced by $\le e$, 
and we refer again to the comments following Theorem \ref{theo:1} for a proof of this. 

Perelli and Zannier \cite{pz2} proved a highly non-trivial property: under the growth condition \eqref{eq:ruzsa} in Conjecture \ref{conj:ruzsaext}, the primary pseudo-polynomial sequence $(a_n)_{n\ge 0}$ satisfies a linear recurrence with polynomials coefficients; see \eqref{eq:recurrence2} below. In other words, 
the generating function $f_a(x):=\sum_{n=0}^\infty a_n x^n \in \mathbb Z[[x]]$ satisfies a linear differential equation with coefficients in $\mathbb Z[x]$. Hence, $f_a$ is a $G$-function~(\footnote{A power series $\sum_{n=0}^\infty a_n x^n \in \Qbar[[x]]$ is said to be a $G$-function when it is solution of a non-zero linear differential equation over $\Qbar(x)$ ($D$-finiteness), and the maximum of the modulus of all the Galoisian conjuguates of $a_0, \ldots, a_n$ as well as the positive denominator of $a_0, \ldots, a_n$ are both bounded for all $n\ge 0$ by $C^{n+1}$, for 
some $C\ge 1$.  For instance, any $D$-finite series in $\mathbb Z[[x]]$ with  positive radius of convergence is a $G$-function. A power series $\sum_{n=0}^\infty \frac{a_n}{n!} z^x \in \Qbar[[x]]$ is said to be an $E$-function when $\sum_{n=0}^\infty a_n x^n$ is a $G$-function.   See \cite{andre, shidlovskii} for the  properties satisfied by these functions.}). Perelli and Zannier also proved a form of Conjecture \ref{conj:ruzsaext} under a stronger assumption than \eqref{eq:ruzsa}, \textit{i.e.} with $e$ replaced by $e^{0.66}$. Zannier \cite{zannier} was even able to replace $e^{0.66}$ by $e^{0.75}$.
\medskip

Zannier \cite[p. 398]{zannier} also proved that we can omit the growth condition if we further assume that $f_a(x)$ is algebraic. That is, if $(a_n)_{n\geq 0}$ is a primary pseudo-polynomial such that $f_a(x)$ is algebraic then $(a_n)_{n\geq 0}$ is a polynomial. Diagonals of rational fractions form an intermediate class between algebraic series and $G$-functions. By definition, the diagonal of a multivariate power series $\sum_{n_1, \ldots, n_k\ge 0}u_{n_1, \ldots, n_k} z_1^{n_1}\cdots z_k ^{n_k}$ is defined by $\sum_{n = 0}^\infty u_{n, n, \ldots, n} z^{n}$. 
A classical result of Furstenberg~\cite{furstenberg} says that algebraic series over a field coincide with diagonals of rational functions in two variables. It would be very interesting to know whether the following conjecture holds true.

\begin{conj}\label{conj diag}
Let $(a_n)_{n\geq 0}\in\mathbb{Z}^\mathbb{N}$ be a primary pseudo-polynomial such that its generating series is the diagonal of a rational fraction. Then $(a_n)_{n\geq 0}$ is a polynomial.
\end{conj}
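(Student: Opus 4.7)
The plan is to combine the primary pseudo-polynomial congruences with the arithmetic rigidity of diagonals: the former force a sharp mod-$p$ rationality for $f_a(x):=\sum_{n\ge 0}a_n x^n$, and the latter restricts the global shape of its defining differential equation. The target is to show that these two inputs together force $f_a$ to be the rational function $Q(x)/(1-x)^{d+1}$ associated to a polynomial sequence, or alternatively to reduce to Zannier's algebraic case.

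First I would translate the congruences $a_{n+p}\equiv a_n\pmod{p}$ into a modular identity for the generating series. Telescoping gives $(1-x^p)f_a(x)\equiv R_p(x)\pmod{p}$ with $R_p\in\mathbb F_p[x]$ of degree less than $p$. Since $1-x^p=(1-x)^p$ in characteristic $p$, this rewrites as
\[
f_a(x)\equiv \frac{R_p(x)}{(1-x)^p}\pmod{p}
\]
for every prime $p$: modulo each prime, $f_a$ is a rational function whose only pole lies at $x=1$, with order at most $p$.

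Second I would exploit the diagonal hypothesis. The series $f_a$ is then $D$-finite and a $G$-function; let $L\cdot f_a=0$ with $L\in\Z[x][d/dx]$. If the diagonal involves only two variables, $f_a$ is algebraic by Furstenberg's theorem and Zannier's result \cite{zannier} concludes directly. In higher dimensions one invokes André's theory of $G$-operators: $L$ is globally nilpotent, its finite singularities are regular with rational exponents of uniformly bounded denominators, and by Katz's theorem its $p$-curvatures are nilpotent for almost all $p$. Combined with the modular identity displayed above, one would argue that $L$ must have its unique finite singularity at $x=1$, because a singularity at any $\alpha\neq 1$ would force a pole of $f_a\bmod p$ away from $x=1$ for primes of good reduction; that the local exponents of $L$ at $x=1$ are non-negative integers, since their mod-$p$ reductions must coincide with those of $R_p(x)/(1-x)^p$, hence with $-p\equiv 0\pmod{p}$, the denominators being bounded; and that the behaviour at $x=\infty$ is similarly constrained. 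A $G$-function with a single finite singularity at $x=1$, non-negative integer local exponents, and the appropriate behaviour at infinity is rational of the form $Q(x)/(1-x)^{d+1}$, so $(a_n)_{n\ge 0}$ is a polynomial in $n$.

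The main obstacle is precisely the step propagating mod-$p$ rationality for every prime $p$ to rationality in characteristic zero: this is exactly the point of contact with Christol's conjecture and van der Poorten's question mentioned in the introduction, for $D$-finiteness together with rationality of all reductions is not known to imply rationality. The additional input that the diagonal hypothesis is meant to supply, beyond mere $D$-finiteness, should come from the Dwork--Frobenius structure on diagonals, or from the fact that diagonals are periods of families defined over $\Q$; either ingredient might force the singular locus and local exponents of $L$ to be as described. Without such a rigidity statement tailored to diagonals, the argument only shows that $f_a$ shares the correct mod-$p$ reduction with the expected rational function for every $p$, which is strictly weaker than the desired conclusion.
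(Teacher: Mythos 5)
This statement is labeled as a \emph{conjecture} in the paper, not a theorem: the authors do not prove it, and they explicitly remark that it would follow from the special case of van der Poorten's conjecture they quote just after it. Your write-up is an honest sketch rather than a proof, and you correctly identify the gap at the end, so there is no disagreement with the paper here; but it is worth naming precisely which parts of your argument are solid and which are not.

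The first step is sound and is exactly the mechanism the paper has in mind. Telescoping the congruence $a_{n+p}\equiv a_n\pmod p$ gives $(1-x^p)f_a(x)\equiv R_p(x)\pmod p$ with $R_p=\sum_{n<p}a_n x^n$, and since $1-x^p=(1-x)^p$ in $\mathbb F_p[x]$ one gets $f_a(x)\equiv R_p(x)/(1-x)^p\pmod p$ for every prime. Thus the reduction of $f_a$ modulo every prime is a rational function with the right singular shape. The reduction to the two-variable case via Furstenberg and Zannier is also fine as far as it goes, but it covers only a strict subcase: the conjecture is about diagonals in arbitrarily many variables, and those are genuinely larger than the algebraic class.

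The middle portion, where you try to propagate mod-$p$ rationality to rationality in characteristic zero through control of the singular locus and local exponents of the minimal operator $L$, does not close. Two concrete issues: knowing that the reduction $f_a\bmod p$ has a pole only at $x=1$ does not preclude $L$ from having an apparent singularity or a genuine singularity at some $\alpha\neq 1$ in characteristic zero, since the solution you care about need not detect every singular point of the operator mod $p$; and even if one could force all finite singularities to $x=1$ with non-negative integer exponents, a $G$-function with those properties need not be rational (logarithmic behaviour at $x=1$ is not ruled out by the exponent data alone). Katz nilpotence and André's theory constrain denominators of exponents, but they do not by themselves give the integrality or the absence of logarithms you would need. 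As you say yourself, the missing ingredient is exactly the diagonal case of van der Poorten's conjecture: that a diagonal of a rational function whose reductions modulo almost all primes are rational is itself rational. Absent that, the argument establishes only the mod-$p$ picture, which is where the paper also stops, leaving the statement as a conjecture.
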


Moreover, diagonals of rational functions are globally bounded $G$-functions in the sense of Christol, who   conjectured that the converse holds (see \cite{christol}). In particular, $G$-functions with integer coefficients are globally bounded. Therefore given the theorem of Perelli and Zannier (recalled below), Christol's Conjecture and Conjecture \ref{conj diag} would 
together imply Conjecture~\ref{conj:ruzsaext} and Ruzsa's Conjecture \ref{conj:ruzsa}. See also related comments in \cite[pp.~392--393]{zannier}.

Furthermore, Conjecture 3 is implied by the following special case of van der Poorten's Conjecture \cite[p. 13]{VDPconj} : Given $f(x)$ the diagonal of a rational fraction, if for almost all primes $p$ the reduction of $f(x)$ modulo $p$ is a rational fraction, then $f(x)$ is a rational fraction.

Hence Conjecture \ref{conj diag} could be seen as an intermediate step towards the proof of Rusza's conjecture. Note that Christol's Conjecture together with van der Poorten's Conjecture imply Rusza's Conjecture.
\medskip

In this paper, we are interested in the properties of  primary pseudo-polynomials and of their generating functions. We now present our four main results, make comments about their significance and mention further open problems.

\medskip

\noindent {\bf {\em $\bullet$ A Hall type characterization of primary pseudo-polynomials.}}
 We shall first prove an analogue (\textit{i.e.} Eq. \eqref{eq:1} below)
of Hall's characterization for pseudo-polynomials  and deduce some consequences of it. We set $P_0=P_1:=1$ and, for $n\geq 2$, 
$$
P_n:=\prod_{p\le n} p,
$$ 
where the product is over prime numbers. By the Prime Number Theorem, we have $P_n^{1/n} \to e$ as $n$ tends to $+\infty$. We also say that a primary pseudo-polynomial which is not a polynomial is a \textit{genuine} primary pseudo-polynomial.

\begin{theo}\label{theo:1} We have the following.
\begin{itemize}
\item[$(i)$] A sequence 
$(a_n)_{n\ge 0}\in \mathbb Z^{\mathbb N}$ is a primary pseudo-polynomial if and only if its binomial transform $(b_n)_{n\ge 0}\in \mathbb Z^{\mathbb N}$ satisfies
\begin{equation} \label{eq:1}
\forall n\ge 0, \; P_n\,\vert\, b_n.
\end{equation}

\item[$(ii)$] Given a genuine primary pseudo-polynomial $(a_n)_{n\ge 0}$, then $$\liminf_{n\to+\infty } \vert b_n\vert^{1/n} \ge e.$$

\item[$(iii)$] If a primary pseudo-polynomial $(a_n)_{n\ge 0}$ satisfies $\limsup_{n\to +\infty} \vert a_n\vert ^{1/n}<e-1$, then $(a_n)_{n\geq 0}$ is a polynomial.

\item[$(iv)$] Given any function $\varphi:\mathbb N\to \mathbb R$ with $\varphi(0)=1$, there exists a genuine primary pseudo-polynomial $(A_n)_{n\ge 0}$ such that $\varphi(n)\le A_n\le \varphi(n)+2P_n$ for all $n\in \mathbb N$.
\end{itemize}
\end{theo}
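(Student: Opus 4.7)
The plan is to deduce all four parts from a single structural result, namely the divisibility characterization in $(i)$, and then to extract the remaining conclusions by pairing the lower bound $|b_n|\ge P_n$ against the Prime Number Theorem asymptotic $P_n^{1/n}\to e$.

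For $(i)$, I would work with the forward difference operator $\Delta$, for which $b_n=(\Delta^{n}a)(0)$. From $\Delta\binom{n}{k}=\binom{n}{k-1}$ and the expansion $a_n=\sum_k b_k\binom{n}{k}$ one obtains the explicit identity
\[
(\Delta^{p}a)(n)=\sum_{j\ge 0}b_{j+p}\binom{n}{j}.
\]
On the other hand, the Newton expansion $a_{n+p}=\sum_{k=0}^{p}\binom{p}{k}(\Delta^{k}a)(n)$ together with $p\mid\binom{p}{k}$ for $1\le k\le p-1$ reduces the primary pseudo-polynomial congruence $a_{n+p}\equiv a_n\pmod{p}$ to $(\Delta^{p}a)(n)\equiv 0\pmod{p}$ for every $n\ge 0$. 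Specializing the above formula at $n=0,1,2,\ldots$ and inducting then forces $p\mid b_{p+j}$ for every $j\ge 0$; gathering this congruence over all primes $p\le n$ yields $P_n\mid b_n$. The converse direction is immediate from the same formula, since each summand $b_{j+p}\binom{n}{j}$ is then divisible by $p$.

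Given $(i)$, the remaining statements reduce to size comparisons. For $(ii)$, whenever $b_n\neq 0$ one has $|b_n|\ge P_n$ by $(i)$, so $|b_n|^{1/n}\ge P_n^{1/n}\to e$; since a genuine primary pseudo-polynomial has $b_n\neq 0$ for infinitely many $n$, the announced bound follows. For $(iii)$, a crude estimate on the binomial transform gives $|b_n|\le C(1+r)^{n}$ for any $r>\limsup|a_n|^{1/n}$; choosing $r<e-1$ so that $1+r<e=\lim P_n^{1/n}$ and invoking $(i)$ forces $b_n=0$ for all sufficiently large $n$, hence $(a_n)$ is a polynomial. For $(iv)$, I would construct $(A_n)$ inductively: once $A_0,\ldots,A_{n-1}$ are fixed, set $T_n=\sum_{k=0}^{n-1}(-1)^{n-k}\binom{n}{k}A_k$, so that any candidate $A_n$ produces $b_n=A_n+T_n$. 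The constraint $P_n\mid b_n$ from $(i)$ together with $\varphi(n)\le A_n\le\varphi(n)+2P_n$ amounts to choosing $b_n$ to be a multiple of $P_n$ inside an interval of length $2P_n$; such intervals always contain at least two multiples of $P_n$, so I can always pick a non-zero one, simultaneously respecting the quantitative bound and guaranteeing that $(A_n)$ is not a polynomial.

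The main obstacle is the non-trivial direction of $(i)$: producing the clean formula for $(\Delta^{p}a)(n)$ in terms of the shifted binomial transform and inductively harvesting the $p$-divisibility of every $b_{p+j}$. Once this structural characterization is secured, each of $(ii)$, $(iii)$, $(iv)$ becomes a balancing act between the forced lower bound $|b_n|\ge P_n$ and standard upper estimates, with the Prime Number Theorem asymptotic $P_n^{1/n}\to e$ doing the heavy lifting.
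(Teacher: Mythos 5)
Your proposal is correct throughout, and part $(i)$ is proved by a genuinely different route from the paper. The authors argue directly on the coefficients $b_{v+mp}$ using the full Lucas congruence $\binom{u+mp}{v+\ell p}\equiv\binom{u}{v}\binom{m}{\ell}\pmod p$; you instead pass through the forward difference operator, using only the elementary fact $p\mid\binom{p}{k}$ for $1\le k\le p-1$ to get $a_{n+p}-a_n\equiv(\Delta^p a)(n)\pmod p$, and then the expansion $(\Delta^p a)(n)=\sum_{j}b_{j+p}\binom{n}{j}$ together with the upper-triangular structure (the coefficient of $b_{n+p}$ at $n$ is $\binom{n}{n}=1$) to extract $p\mid b_{n}$ for all $n\ge p$ by induction on $n$. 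This trades Lucas' full theorem for a cheaper congruence plus a small induction, which is a clean and arguably more elementary argument; the paper's proof is more ``one-shot'' but leans on a deeper congruence. Your proofs of $(ii)$, $(iii)$, $(iv)$ essentially coincide with the paper's: $(ii)$ is the same lower bound $|b_n|\ge P_n$ on nonzero terms, $(iii)$ is the same binomial estimate $|b_n|\le C(1+r)^n$ compared against $P_n^{1/n}\to e$ (the paper phrases this as a contradiction with $(ii)$, you apply the divisibility directly --- same content), and $(iv)$ is the same greedy choice of a multiple of $P_n$ in an interval of length $2P_n$, with your $T_n$ playing the role of (minus) the paper's $C_n$. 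One small caveat you inherit from the paper: in $(ii)$, asserting $\liminf|b_n|^{1/n}\ge e$ from ``$b_n\neq 0$ infinitely often'' tacitly reads the $\liminf$ along the nonzero terms; if $b_n$ vanishes infinitely often the literal $\liminf$ would be $0$. This imprecision is present in the source as well, so it is not a gap you introduced, but it is worth being aware of.
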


We recall that 
$$
d_n=\prod_{p\le n} p^{\lfloor\log_p(n)\rfloor}. 
$$
Hence, 
for all $n\ge 0$, $P_n$ divides $d_n$, but obviously $d_n$  divides $P_n$ 
for no $n\ge 4$. Choosing $b_n:=P_n$ in \eqref{eq:1}, the resulting sequence in \eqref{eq:involution}  $a_n:=\sum_{k=0}^n \binom{n}{k}P_k$ is a primary pseudo-polynomial, but not a pseudo-polynomial because it does not satisfy Hall's criterion \eqref{eq:hall}. Under the assumption in $(ii)$,  if we also assume that $b_n$ is eventually of the same sign, then 
$
\liminf_{n\to +\infty}\vert a_n\vert^{1/n} \ge e+1
$
because $a_n=\sum_{k=0}^n \binom{n}{k}b_k$. 
Consequently, any putative counter-example $(a_n)_{n\ge 0}$ to Conjecture~\ref{conj:ruzsaext}  must be such that its binomial transform $(b_n)_{n\ge 0}$ changes sign infinitely often. A similar remark applies to Ruzsa's 
Conjecture~\ref{conj:ruzsa}.

Assertion $(iii)$ is the analogue of the Hall-Ruzsa result recalled at Eq. \eqref{eq:hallruzsa}.

In $(iv)$, given $\varphi$, the existence of sequence $(A_n)_{n\ge 0}$ is 
proved constructively by an inductive process. An important consequence of $(iv)$ is the existence of  a genuine primary pseudo-polynomial of any growth $\varphi(n)>0$ provided $\liminf_n \varphi(n)^{1/n} \ge e$. 
In particular,  with $\varphi(n)=P_n$, we deduce that $<e$ cannot be replaced by $\le e$ on the right-hand side of  \eqref{eq:ruzsaext} in Conjecture~\ref{conj:ruzsaext}.

\medskip

\noindent {\bf {\em $\bullet$ Primary pseudo-polynomials with an algebraic generating series.}}
The generating functions $f_a$ and $f_b$ of the sequences $(a_n)_{n\ge 0}$ and its binomial transform $(b_n)_{n\ge 0}$ satisfy the relations
\begin{equation}\label{eq:fabalg}
f_b(x)=\frac{1}{1+x}f_a\Big(\frac{x}{1+x}\Big)\quad\textup{and}\quad f_a(x)=\frac{1}{1-x}f_b\Big(\frac{x}{1-x}\Big).
\end{equation}
In particular $f_a(x)$ is algebraic over $\mathbb Q(x)$ if and only if $f_b(x)$ is algebraic over $\mathbb Q(x)$.

\begin{theo}\label{theo:2} 
Let $(a_n)_{n\ge 0}\in \mathbb Z^{\mathbb N}$ be a primary pseudo-polynomial, and let $(b_n)_{n\ge 0}\in \mathbb Z^{\mathbb N}$ be its binomial transform.

\begin{enumerate} 
\item[$(i)$] Assume there exists $m\ge 0$ such that  
$f_a^{(m)}(x)$ 
is algebraic over $\mathbb Q(x)$. Then   
$(a_n)_{n\geq 0}$ is a polynomial, and thus $f_a(x)\in \mathbb Q(x)$.

\item[$(ii)$] If 
$f_b(x)$  
is algebraic over $\mathbb Q(x)$, then $f_b(x)$ is in $\mathbb Z[x]$.
\end{enumerate}
\end{theo}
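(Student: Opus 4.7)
The plan is to reduce both parts to the $m=0$ case of (i), which is precisely the theorem of Zannier recalled above. Part (ii) is almost immediate: the relations \eqref{eq:fabalg} show that $f_b$ is algebraic over $\mathbb{Q}(x)$ if and only if $f_a$ is, so Zannier's result forces $(a_n)$ to be a polynomial, its binomial transform $(b_n)$ vanishes for all large $n$, hence $f_b\in\mathbb{Q}[x]$, and since $b_n\in\mathbb{Z}$ one concludes $f_b\in\mathbb{Z}[x]$.

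For part (i), I introduce the auxiliary sequence $c_k:=(k+1)(k+2)\cdots(k+m)\,a_{k+m}$ for $k\ge 0$. A direct computation yields $\sum_{k\ge 0}c_k\,x^k=f_a^{(m)}(x)$, which is algebraic by hypothesis, while the congruences $(k+p+i)\equiv(k+i)\pmod{p}$ for $1\le i\le m$ combined with $a_{k+p+m}\equiv a_{k+m}\pmod{p}$ show that $(c_k)$ is itself a primary pseudo-polynomial. Zannier's theorem applied to $(c_k)$ then yields $Q\in\mathbb{Q}[X]$ with $c_k=Q(k)$ for all $k\ge 0$, so that $a_{k+m}=Q(k)/\bigl((k+1)\cdots(k+m)\bigr)\in\mathbb{Z}$ for every $k\ge 0$. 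To extract polynomial structure I claim $(X+1)(X+2)\cdots(X+m)$ divides $Q$ in $\mathbb{Q}[X]$: fix $j\in\{1,\ldots,m\}$, take a prime $p>m$ larger than the denominators of the coefficients of $Q$, and set $k=p-j\ge 0$. Exactly one factor of $(k+1)(k+2)\cdots(k+m)$, namely $k+j=p$, is divisible by $p$, so integrality of $a_{p+m-j}$ forces $p\mid Q(p-j)$, while $Q(p-j)\equiv Q(-j)\pmod{p}$. Hence $Q(-j)$ is a fixed rational divisible by infinitely many primes, so $Q(-j)=0$. Writing $Q(X)=S(X)(X+1)\cdots(X+m)$ and $P(X):=S(X-m)\in\mathbb{Q}[X]$ we obtain $a_n=P(n)$ for every $n\ge m$.

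To extend the formula to $n<m$, I invoke Theorem~\ref{theo:1}(i). Since $P$ takes integer values on infinitely many integers, it is integer-valued on all of $\mathbb{Z}$; set $\epsilon_k:=a_k-P(k)\in\mathbb{Z}$ for $0\le k<m$. Using the vanishing of iterated forward differences of a polynomial sequence, one checks that for every $n>\max(\deg P, m-1)$,
\begin{equation*}
b_n \;=\; \sum_{k=0}^{m-1}(-1)^{n-k}\binom{n}{k}\epsilon_k \;=\; (-1)^n\sum_{k=0}^{m-1}(-1)^k\epsilon_k\binom{n}{k},
\end{equation*}
so $|b_n|=O(n^{m-1})$. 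Theorem~\ref{theo:1}(i) then gives $P_n\mid b_n$ with $P_n$ growing exponentially, forcing $b_n=0$ for all $n$ large enough. The polynomial $\sum_{k=0}^{m-1}(-1)^k\epsilon_k\binom{X}{k}$ therefore vanishes on infinitely many integers, so it is identically zero, and the linear independence of $\binom{X}{0},\ldots,\binom{X}{m-1}$ in $\mathbb{Q}[X]$ yields $\epsilon_k=0$ for each $k<m$; hence $a_n=P(n)$ for all $n\ge 0$. The main obstacle will be the $p$-adic step establishing $Q(-j)=0$, which hinges on isolating precisely one factor divisible by $p$ in the product $(k+1)\cdots(k+m)$; the final primorial-versus-polynomial growth argument is then a quick application of Theorem~\ref{theo:1}(i).
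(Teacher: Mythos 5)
Your proof is correct, but the two arguments are genuinely different in structure, and the difference matters for the paper's own agenda. You treat Zannier's theorem (the $m=0$ case of $(i)$) as given and deduce part $(ii)$ from it via \eqref{eq:fabalg}. The paper does the opposite: it proves $(ii)$ \emph{from scratch} by reducing the polynomial relation $A_d f_b^d = \sum_\ell A_{j_\ell} f_b^{j_\ell}$ modulo large primes $p$, observing that $f_b \bmod p$ is a polynomial $Q_p$ of degree $q_p \le p-1$ by Theorem~\ref{theo:1}$(i)$, and extracting a bound $q_p \le N$ with $N$ independent of $p$; then it deduces the $m=0$ case of $(i)$ from $(ii)$ via \eqref{eq:fabalg}. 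The paper is explicit that this is intended as a \emph{new proof} of Zannier's result, so your route, while logically valid, bypasses what the authors view as the main novelty of this theorem. (It is also worth noting that the paper's mod-$p$ argument actually proves the stronger statement where ``primary pseudo-polynomial'' is replaced by the assumption that $a_{n+p}\equiv a_n \bmod p$ holds only for $p$ in an infinite set of primes; your deduction from Zannier's theorem, as stated, does not obviously yield that refinement.)

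For the reduction from general $m$ to $m=0$, both you and the paper introduce $c_k=(k+1)\cdots(k+m)a_{k+m}$, note that it is a primary pseudo-polynomial with generating series $f_a^{(m)}$, and apply the $m=0$ case. The difference is in how you conclude that $a_{k+m}$ itself is polynomial: you run a $p$-adic argument showing $Q(-j)=0$ for $j=1,\dots,m$ to factor out $(X+1)\cdots(X+m)$, whereas the paper invokes Lemma~\ref{lem:1412} (a rational function of $X$ taking integer values on infinitely many integers lies in $\mathbb{Q}[X]$), which is shorter and avoids the explicit denominator analysis. Finally, you carefully push the polynomial formula from $n\ge m$ down to all $n\ge 0$ via the growth of the binomial transform against $P_n\mid b_n$; the paper stops at ``$(a_{n+m})_{n\ge 0}$ is a polynomial'' and leaves this last step implicit (it can also be closed more quickly by noting that $a_n=P(n)$ for $n\ge m$ makes $f_a$ rational, hence algebraic, so the $m=0$ case applies). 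Your treatment of this endgame is the more explicit of the two.
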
 
As said above,  the case $m=0$ in $(i)$ was proved by Zannier in \cite[p. 398]{zannier}. We shall present a different proof of this case. It implies that if there exists a counter example to Conjecture 
\ref{conj:ruzsaext} or to Ruzsa's Conjecture \ref{conj:ruzsa}, 
then its generating function $f_a(x)$ is transcendental over $\mathbb C(x)$. 

\medskip

\noindent {\bf {\em $\bullet$ 
An effective version of a result of Perelli and Zannier.}}
 In \cite {pz2}, Perelli and Zannier sketched the proof of the following result.
\begin{theo*}[Perelli-Zannier]
Let $(a_n)_{n\ge 0}$ be a primary pseudo-polynomial such that there exist 
$c>0$ and $1< \delta<e$ such that $\vert a_n\vert \le c \delta^n$ for all 
$n\ge 0$. Then there exist an integer $S\ge 0$ and $S+1$ polynomials $p_0(X), \ldots, p_S(X)\in \mathbb Z[X]$ not all zero such that, for all $n\geq 0$, we have
\begin{equation}\label{eq:recurrence2}
\sum_{j=0}^S p_j(n) a_{n+j}=0.
\end{equation}
In other words, $f_a(x)$ is $D$-finite, and even a $G$-function.
\end{theo*}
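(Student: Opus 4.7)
The plan is to combine the arithmetic content of the primary pseudo-polynomial property with the analytic growth bound via a Siegel-lemma construction, and then iterate a ``size versus divisibility'' dichotomy that crucially uses the strict gap $\delta<e$.

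\emph{Step 1 (Arithmetic).} The congruences $a_{n+p}\equiv a_n\pmod p$ are equivalent to
\[
(1-x^p)\,f_a(x)\equiv N_p(x)\pmod p,\qquad N_p(x):=\sum_{n<p} a_n x^n,
\]
so $f_a\bmod p$ is the rational fraction $N_p(x)/(1-x^p)$. The key consequence is that, for any fixed polynomials $p_0,\dots,p_S\in \mathbb Z[X]$, the integer sequence
\[
R_n:=\sum_{j=0}^S p_j(n)\,a_{n+j}
\]
satisfies $R_n\equiv R_{n'}\pmod p$ whenever $n\equiv n'\pmod p$, since both $p_j(n)\bmod p$ and $a_{n+j}\bmod p$ depend only on $n\bmod p$. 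In particular, the vanishing of $R_0,\dots,R_{p-1}$ forces $p\mid R_n$ for every $n\ge 0$.

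\emph{Step 2 (Siegel construction).} Fix $S\ge 1$ and a ratio $\lambda$ slightly larger than $1/(1-\log\delta)>1$ (possible because $\delta<e$). Pick $D=D(N)$ so that $(S+1)(D+1)\ge\lambda(N+1)$. The system ``$R_n=0$ for $0\le n\le N$'' is homogeneous linear over $\mathbb Z$ in the $(S+1)(D+1)$ coefficients of the $p_j$'s, with entries $n^k a_{n+j}$ bounded in absolute value by $CN^D\delta^{N+S}$. Siegel's lemma yields a nontrivial integer solution whose coefficients have height
\[
H\;\ll\;\bigl(CN^D\delta^{N+S}\bigr)^{1/(\lambda-1)},
\]
and therefore $|R_n|\ll_N \delta^{N/(\lambda-1)+n}$ up to polynomial factors in $n$.

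\emph{Step 3 (Iteration).} By Step~1, the vanishing $R_0=\dots=R_N=0$ forces $P_{N+1}\mid R_n$ for every $n\ge 0$. Comparing with $P_{N+1}\sim e^{N+1}$ and the size bound, $R_n=0$ on a range $n\le M_1\approx(N+1)/\log\delta-N/(\lambda-1)$, which exceeds $N$ by the choice of $\lambda$. Reapplying Step~1 with the enlarged vanishing range yields $P_{M_1+1}\mid R_n$ for all $n$, and the size bound now gives $R_n=0$ for $n\le M_2\approx M_1/\log\delta$. Since $1/\log\delta>1$ thanks to $\delta<e$, the $M_k$ grow geometrically, so in the limit $R_n=0$ for \emph{every} $n\ge 0$; this is exactly the recurrence~\eqref{eq:recurrence2}. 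Since $f_a\in\mathbb Z[[x]]$ has positive radius of convergence and is $D$-finite, it is automatically a $G$-function.

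The main obstacle is the precise quantitative bookkeeping of Step~3: one must carefully track how the Siegel height $H$, the polynomial factor $n^D$, and the primorial $P_{N+1}$ compare, and verify that the choice of $\lambda$ (which is available only because $\delta<e$) makes the iteration converge to ``$R_n=0$ for all $n$.'' Making all constants in this argument explicit is exactly what produces the effective version announced here; the same analysis also shows that the hypothesis $\delta<e$ cannot be weakened to $\delta\le e$, in agreement with Theorem~\ref{theo:1}$(iv)$.
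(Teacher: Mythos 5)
Your Step~1 is correct and matches the arithmetic observation the paper also exploits, but Step~2 contains a genuine gap. Fixing $S$ and taking $D=D(N)\approx \lambda N/(S+1)$ to make Siegel's lemma applicable forces the coefficient bound $N^D=e^{\Theta(N\log N)}$ to be super-exponential in $N$; Siegel's lemma then only guarantees a solution with $\log H\gg N\log N/(\lambda-1)$, and the factor $n^D$ in $|R_n|\le (S+1)(D+1)H\,n^D\,c\,\delta^{n+S}$ contributes another $\Theta(N\log N)$ when $n\sim N$. Against $\log P_{N+1}\sim N$ this is fatal: the inequality $|R_n|<P_{N+1}$ cannot hold beyond a bounded range of $n$, so the iteration in Step~3 never gets started. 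Your notation ``$\ll_N$'' and ``up to polynomial factors in $n$'' precisely hide the offending constant $N^{D(1+1/(\lambda-1))}\gg e^{cN\log N}$, which cannot be absorbed when comparing against $P_{N+1}$.

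The paper avoids Siegel's lemma entirely and uses instead the Perelli--Zannier counting lemma: for integer tuples $(x_1,\dots,x_N)$ with $|x_j|\le k_j$ satisfying $x_{n+p}\equiv x_n\pmod p$, the number of such tuples is at most $\prod_{j=1}^{N}\left(1+2k_j/P_{j-1}\right)$, and for $k_j\ll j^D\delta^j$ with $\delta<e$ this product \emph{converges} as $N\to\infty$. This is the decisive arithmetic input your proposal is missing: it shows the number of admissible vectors $\left(R(0),\dots,R(N-1)\right)$ is bounded \emph{independently of $N$}, so the ``effective'' number of constraints is $O_{\delta,D}(1)$, not $N+1$. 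One can then pigeonhole among the $(2H+1)^{RD}$ candidates $F(n)=\sum_{j<R}q_j(n)a_{n+j}$ with degree $<D$, number of terms $R$, and height $\le H$ \emph{all fixed}, obtaining for every $N$ (as large as desired) a nonzero $G_N$ of fixed complexity vanishing on $\{0,\dots,N-1\}$. That is exactly the input your Step~3 needs. Once it is in place, your Step~3 iteration (vanishing range growing by the factor $1/\log\delta>1$ at each pass) is a valid and somewhat more direct alternative to the paper's finer ``gap at $[2M_N,(2+\alpha)M_N]$'' argument; the repair your proof requires is to replace Siegel's lemma in Step~2 by the Perelli--Zannier counting lemma.
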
  
Perelli and Zannier mentioned that it would be possible to provide  upper 
bounds for $S$ and the degree/height of the $p_j(X)$ in terms of $\delta$, but they did not write them down. We make more precise their theorem as 
follows, where given $Q(X)=\sum_{j} q_j X^j \in \mathbb C[X]$, we set $H(Q):=\max_j \vert q_j\vert$. 
\begin{theo}\label{theo:recurrence} Let $(a_n)_{n\ge 0}$ be a primary pseudo-polynomial such that there exist 
$c>0$ and $1< \delta<e$ such that $\vert a_n\vert \le c \delta^n$ for all $n\ge 0$. Then, there exists an effectively computable constant $H(c,\delta)\ge 1$ such that a non-trivial linear recurrence for $(a_n)_{n\ge 0}$ as in  \eqref{eq:recurrence2} holds with 
\begin{equation}\label{eq:upperbounds}
\begin{cases}
\max_j\deg(p_j) \le  \max\left(0,\left\lceil\frac{5\log(\delta)-1}{1-\log(\delta)}\right\rceil\right),
\\
\max_j H(p_j) \le H(c,\delta),
\\
S\le  \log\big(H(c,\delta)\big)/\log(\delta).
\end{cases}
\end{equation}
Moreover, the Perelli-Zannier Theorem is best possible in the sense that its conclusion  does not necessarily hold if  $e$ is replaced by any larger number in the assumption $1< \delta<e$.
\end{theo}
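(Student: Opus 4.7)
The plan is to combine a Padé-type construction via Siegel's lemma with Theorem~\ref{theo:1}$(i)$ applied to the error sequence. For parameters $d,S,N$ to be tuned, one seeks polynomials $p_0,\dots,p_S \in \mathbb Z[X]$ of degree $\le d$, not all zero, so that the error
$$
e_n := \sum_{j=0}^S p_j(n)\, a_{n+j}
$$
vanishes for $n=0,1,\dots,N-1$. This amounts to a homogeneous linear system of $N$ equations in $(S+1)(d+1)$ integer unknowns (the coefficients of the $p_j$), whose matrix entries are bounded in absolute value by $N^d\,c\,\delta^{N+S}$. Provided $(S+1)(d+1)>N$, Siegel's lemma (Bombieri--Vaaler form) yields a nontrivial integer solution of height $H$ that is explicitly controlled in terms of $N,d,S,c,\delta$.

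The key observation is that $(e_n)_{n\ge 0}$ is itself a primary pseudo-polynomial: shifts preserve the primary congruences, elements of $\mathbb Z[X]$ are ordinary pseudo-polynomials (hence primary), and the primary pseudo-polynomials form a ring. Consequently Theorem~\ref{theo:1}$(i)$ gives $P_n\mid \widetilde b_n$, where $(\widetilde b_n)_{n\ge 0}$ is the binomial transform of $(e_n)_{n\ge 0}$. Since $e_k=0$ for $k<N$, a straightforward induction on $n\ge N$ shows that once the inequality $|e_n|<P_n$ is guaranteed at every step, one concludes $e_n=0$ for all $n\ge N$: the induction hypothesis $e_k=0$ for all $k<n$ collapses the sum defining $\widetilde b_n$ to $\widetilde b_n=e_n$, and the divisibility $P_n\mid e_n$ combined with the strict size bound forces $e_n=0$.

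The technical core is therefore to guarantee $|e_n|<P_n$ for every $n\ge N$. Since $P_n=e^{n(1+o(1))}$ by the Prime Number Theorem, and since $|e_n|\le H(S+1)(d+1)\,n^d\,c\,\delta^{n+S}$, the ratio $|e_n|/P_n$ decays like $H(\delta/e)^n$ up to polynomial factors precisely because $\delta<e$. Plugging in the Siegel bound for $H$ imposes the constraint $(S+1)(d+1)\gtrsim N/(1-\log\delta)$; the three parameters $N,d,S$ are then optimized subject to this constraint in order to minimize~$H$. This optimization yields the explicit shape of \eqref{eq:upperbounds}: the formula $d=\max\bigl(0,\lceil(5\log\delta-1)/(1-\log\delta)\rceil\bigr)$ emerges from balancing the $(d+1)\log N$ contribution against the main term in the Siegel height, while the bound on $S$ is read off by tracking the dependence of $H$ on $S$. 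This delicate parameter balancing is, I expect, the main obstacle.

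For the ``best possible'' part, I would apply Theorem~\ref{theo:1}$(iv)$ with $\varphi(n)=\delta^n$ for any fixed $\delta>e$, which produces genuine primary pseudo-polynomials $(A_n)$ with $|A_n|\le 3\delta^n$ for all large $n$. The inductive construction of $(iv)$ leaves at least two admissible integer choices at each step $n$, because the primary congruences fix $A_n$ only modulo $P_n$ while the prescribed window $[\varphi(n),\varphi(n)+2P_n]$ has length $2P_n$. Hence the construction in fact yields an uncountable family of such sequences. Since the set of $D$-finite integer sequences is countable, almost every member of this family fails to be $D$-finite, exhibiting a primary pseudo-polynomial with growth $\le c\delta^n$ that satisfies no recurrence of the form \eqref{eq:recurrence2}; this shows that the threshold $e$ in the hypothesis cannot be enlarged.
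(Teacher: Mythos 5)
Your overall shape — produce an auxiliary primary pseudo-polynomial $e_n=\sum_j p_j(n)a_{n+j}$ vanishing on an initial segment, then force it to vanish identically using Theorem~\ref{theo:1}$(i)$ — matches the paper's, but both halves of the argument are executed in a genuinely different (and, for the quantitative claim, weaker) way. For the existence of the $p_j$, the paper does \emph{not} invoke Siegel's lemma on the raw linear system; it uses the Perelli--Zannier counting lemma, which bounds the number of integer vectors $(x_1,\dots,x_N)$ satisfying the primary congruences by $\prod_{j=1}^N(1+2k_j/P_{j-1})$. Because of the $P_{j-1}$ in the denominator, this product \emph{converges} as $N\to\infty$ (that is the point of the hypothesis $\delta<e$), so the height $H$, the order $R=S+1$ and the degree bound $D$ can be fixed \emph{independently of $N$}, and one may then let $N\to\infty$ freely. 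Bombieri--Vaaler applied blindly to the $N\times(S+1)(d+1)$ system does not see the congruence structure and yields an $H$ that grows with $N$; this couples the two steps of your argument and destroys the decoupling that the paper exploits.

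For the vanishing step, your observation that the binomial transform $\widetilde b_n$ collapses to $e_n$ when $e_0=\dots=e_{n-1}=0$, hence $P_n\mid e_n$, is correct and is a neat clean alternative. But it only delivers the divisor $P_n\approx e^{n}$, whereas the paper's two-stage bootstrap first forces $G_N$ to vanish on the outer window $[2M_N,(2+\alpha)M_N]$ using divisors of size $e^{(m/2+M_N)(1+o(1))}$ (primes $p$ with $M_N\le p\le m/2$ contribute via $m-2p$), and then gets $P_{(1+\alpha)M_N-1}\mid G_N(M_N+1)$, a divisor of size $e^{(1+\alpha)M_N(1+o(1))}$ --- strictly more than $P_{M_N+1}$. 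This extra leverage is what lets the paper run the contradiction with $\alpha\in\big(0,\tfrac{2}{\log\delta}-2\big)$ for any $\delta<e$ while keeping $D$ of order $\tfrac{4\log\delta}{1-\log\delta}$. A direct optimisation of your constraint "$|e_N|<P_N$ together with Siegel" gives, in the regime $\log\delta\to 1$, a degree requirement of order $\big(\tfrac{\log\delta}{1-\log\delta}\big)^2$, not $\tfrac{\log\delta}{1-\log\delta}$; so the claim that your balancing "yields the explicit shape of \eqref{eq:upperbounds}" is not substantiated and is almost certainly false as stated. Your argument does prove \emph{a} recurrence exists for all $\delta<e$, but not with the bounds of the theorem. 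Finally, your cardinality argument for the ``best possible'' clause (two admissible choices at each step of the construction of Theorem~\ref{theo:1}$(iv)$ give uncountably many sequences, while $D$-finite integer sequences are countable) is a valid and elegant alternative to the paper's, which instead takes $\varphi(n)=P_n$ and uses that a $D$-finite integer sequence cannot have $|a_n|^{1/n}\to e$ since $e$ is transcendental.
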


To prove the final statement in Theorem \ref{theo:recurrence}, we take $\varphi(n):=P_n$ in Theorem~\ref{theo:1}$(iv)$: we obtain a genuine primary pseudo-polynomial $A_n$ such that $\vert A_n\vert^{1/n} \to  e$. Hence $(A_n)_{n\ge 0}$ does not satisfy a non-zero linear recurrence with coefficients polynomials over $\mathbb{Q}$.~(\footnote{Indeed, if a solution $(a_n)_{n\ge 0}$ of a 
linear  recurrence with coefficients polynomials over $\mathbb{Q}$ is such that $\vert a_n\vert^{1/n}\to \alpha$ finite, then $\alpha$ is an algebraic number. 

Moreover, if a sequence of {\em rational numbers} satisfies a non-zero linear recurrence of minimal order with coefficients polynomials over $\mathbb{C}$, then these coefficients are necessarily polynomials over $\mathbb Q$, up to a common non-zero multiplicative constant. Hence $(d_n)_{n\ge 0}$ 
and  $(P_n)_{n\ge 0}$ do not satisfy any non-zero linear recurrence with coefficients polynomials over $\mathbb{C}$. 
}) 

Lower and upper bounds for the function $H(c,\delta)$ are given in    
\eqref{eq:boundlowH} and \eqref{eq:boundupH} respectively 
in \S\ref{ssec:conclusion1}. Our bound for $\max\deg(p_j)$ in \eqref{eq:upperbounds} is obviously not optimal; in fact, at the cost of more complicated computations, Perelli-Zannier \cite{pz2} and then Zannier \cite{zannier} obtained better bounds when $\delta\le e^{0.66}$ and $\delta\le e^{0.75}$. 

The classification of primary pseudo-polynomials with a $D$-finite generating series is an open problem. 
As shown by the above example $(A_n)_{n\ge 0}$, 
Theorem \ref{theo:1} rules out the possibility that {\em every} primary pseudo-polynomial satisfies a linear recurrence with coefficients polynomials over $\mathbb{Q}$. 
Another example is  
 the primary pseudo-polynomial 
$D_n:=\sum_{k=0}^n \binom{n}{k} P_k$: it cannot satisfy such a linear 
recurrence because otherwise
$$
P_n=\sum_{k=0}^n (-1)^{n-k}\binom{n}{k} D_k 
$$
would satisfy one as well, which is not possible because $P_n^{1/n}\to e$. 
Since $P_n\ge 0$ and $P_n=e^{n+o(n)}$, a simple 
analytic argument shows that $D_n=(e+1)^{n+o(n)}$. 
More specifically, it would be interesting to know if there is any genuine primary pseudo-polynomial $(a_n)_{n\ge 0}$ such that $f_a(x)$ is 
a $G$-function. There exist primary pseudo-polynomials $(a_n)_{n\ge 0}$ such that $f_a$ is $D$-finite but is not a $G$-function. For instance, 
$$
e_n:=\lfloor (n+1)!e\rfloor=\sum_{k=0}^{n+1} \binom{n+1}{k}k!\quad (n\ge 
0)
$$ 
is a primary pseudo-polynomial by Theorem \ref{theo:1}, and for all $n\ge 0$, 
$$
e_{n+2}=(n+4)e_{n+1}-(n+2)e_{n}\quad (e_0=2, e_1=5),
$$
so that $\sum_{n= 0}^\infty e_n x^n$ is $D$-finite.  A method to obtain further examples is presented at the end of 
\S\ref{sec:prooftheo51}.

\medskip

\noindent {\bf {\em $\bullet$ A P\'olya type result for primary pseudo-polynomials.}}
Perelli and Zannier  also proved in \cite{pz} that if a (primary) pseudo-polynomial $a_n=F(n)$ for some entire function $F$ such that 
$$
\limsup_{R\to \infty} \frac{1}R\log \max_{\vert x\vert= R}\vert F(x)\vert<\log(e+1),
$$
then $(a_n)_{n\geq 0}$ is a polynomial. 
We prove here a result of a similar flavor with a different analyticity condition. 
\begin{theo}\label{theo:3} Let  
$(a_n)_{n\ge 0}\in \mathbb Z^{\mathbb N}$ be a primary pseudo-polynomial. 
Let us assume that there exists $F(x)$ analytic in a right-half plane $\Re(x)>u$  such that $a_n=F(n)$ for all $n>u$, and $c>0, 0<\rho< \log(2\sqrt{e})$ such that
\begin{equation}\label{eq:2}
\big\vert F(x)\big\vert \le  c\cdot e^{\rho \Re(x)}
\end{equation}
for $\Re(x)> u$. Then $F(x)$ is  in $\mathbb Q[x]$ and $(a_n)_{n\geq 0}$ is a polynomial.
\end{theo}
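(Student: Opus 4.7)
The plan is to prove that the binomial transform $(b_n)_{n\ge 0}$ of $(a_n)_{n\ge 0}$ vanishes for all $n$ large enough. By the characterization of polynomials recalled after \eqref{eq:involution}, this will yield $a_n = P(n)$ for all $n \ge M$ with some $P \in \mathbb{Q}[x]$; a Carlson-type argument applied to $g(s) := F(s+M) - P(s+M)$ (which is analytic on a right half-plane, bounded there by $c\,e^{\rho M}e^{\rho\Re(s)} + |P(s+M)|$, polynomially bounded on the imaginary axis, and vanishes at every non-negative integer) then forces $g\equiv 0$, so $F \equiv P \in \mathbb{Q}[x]$. By Theorem~\ref{theo:1}$(i)$, $P_n \mid b_n$ for every $n$, and $P_n^{1/n} \to e$ by the Prime Number Theorem; it therefore suffices to show $\limsup_n |b_n|^{1/n} < e$, for then $|b_n| < P_n$ eventually and the integer $b_n/P_n$ must vanish.

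A preliminary shift $(a_n)\mapsto(a_{n+N})$ with $N>u$ an integer (which preserves being a primary pseudo-polynomial, preserves the dichotomy polynomial vs.\ genuine, and replaces $F$ by $F_N(s) = F(s+N)$, analytic on $\Re(s) > u - N < 0$ with a bound of the same form) reduces matters to the case $u < 0$; then $F$ is analytic in a neighbourhood of $[0,+\infty)$ and $a_n = F(n)$ for every $n \ge 0$. The key analytic input would then combine the classical identity
$$b_n = \Delta^n F(0) = \int_0^1 \!\cdots\! \int_0^1 F^{(n)}(t_1 + \cdots + t_n)\,dt_1\cdots dt_n$$
with a Cauchy bound for $F^{(n)}(x)$ obtained by integrating $F(z)/(z-x)^{n+1}$ along the vertical line $\Re(z) = \beta$, $\beta > x$. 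Because \eqref{eq:2} is \emph{uniform} in $\Im(z)$, the standard estimate $\int_{-\infty}^\infty|(\beta-x)+it|^{-(n+1)}dt = O((\beta-x)^{-n}n^{-1/2})$ applies directly; optimising $\beta = x + n/\rho$ (which lies in the analyticity half-plane for every $x \ge 0$ since $u < 0$) and invoking Stirling produces the pointwise bound $|F^{(n)}(x)| \le C\,e^{\rho x}\rho^n$ up to polynomial factors in $n$. Inserting this into the Irwin--Hall moment identity $\int_{[0,1]^n} e^{\rho(t_1+\cdots+t_n)}\,dt = \big((e^\rho-1)/\rho\big)^n$ yields $|b_n| \le C'\,(e^\rho-1)^n\cdot n^{O(1)}$. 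Since $\log(2\sqrt{e}) < \log(e+1)$, the condition $e^\rho - 1 < e$ holds whenever $\rho < \log(2\sqrt{e})$, and the theorem follows.

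The main obstacle is making the Cauchy estimate rigorous: the vertical line $\Re(z) = \beta$ is open, so the contour must be closed with a second vertical line $\Re(z) = u' \in (u, 0)$ and horizontal pieces tending to zero, and one must verify that this second integral does not spoil the bound. A direct estimate shows its contribution to $|F^{(n)}(x)|$ is of order $e^{\rho u'}(n/(e(x-u')))^n$ up to polynomial factors, which is large only near $x = 0$ (and, by the symmetry of $q_n$, near $x = n$); but on $[0,1]$ the Irwin--Hall density satisfies $q_n(x) = x^{n-1}/(n-1)!$, so $\int_0^1 q_n(x)(n/(e(x-u')))^n\,dx$ is of order $2^n/\sqrt n = o(e^n)$ and is absorbed. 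A minor additional subtlety is the initial Carlson step: the function $g = F - P$ has polynomial growth on the imaginary axis (critical constant $0$, well below $\pi$) and at most exponential growth in $\Re(s)$, so the standard Carlson theorem applies on the half-plane $\Re(s) \ge 0$ after the shift by $M$ and concludes $g \equiv 0$.
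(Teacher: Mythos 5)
Your proof is correct, and it takes a genuinely different route from the paper's. The paper expresses $b_{n-1}$ as a Cauchy integral of $F(z-1)/((z-1)\cdots(z-n))$ over the circle of center $n$ and radius $n$, then invokes N\"orlund's estimate $\bigl|(n-1)!/((z-1)\cdots(z-n))\bigr|\le c_1(n)e^{-2n\cos(x)\psi(x)}$ with $\psi(x)=\cos(x)\log(2\cos x)+x\sin x$, and bounds the integral by length times maximum of the integrand. The resulting exponent $2n\max(0,\rho-\log 2)$ is tight at $x=0$, giving the threshold $\rho<\tfrac12+\log2=\log(2\sqrt e)$. You instead write $b_n=\Delta^nF(0)$, apply the Hermite--Genocchi representation $\Delta^nF(0)=\int_{[0,1]^n}F^{(n)}(t_1+\cdots+t_n)\,dt$, and estimate $F^{(n)}$ by closing a Cauchy contour with two vertical lines $\Re z=u'$ and $\Re z=\beta$; optimising $\beta=x+n/\rho$ gives $|F^{(n)}(x)|\lesssim\rho^ne^{\rho x}$ (which is sharp, being attained for $F=e^{\rho x}$), and the moment identity then gives $|b_n|\lesssim(e^\rho-1)^n$. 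Since $(e^\rho-1)\le e^{2\rho}/4$ always, with equality only at $\rho=\log2$, your bound is strictly sharper than the paper's for every $\rho\ne\log2$, and in fact proves the theorem under the weaker hypothesis $\rho<\log(e+1)$ --- matching, for half-plane functions with a bound in $\Re(x)$, the constant Perelli and Zannier obtained in \cite{pz} for entire functions bounded in $|x|$. In short, both methods reduce to $\limsup_n|b_n|^{1/n}<e$ followed by Theorem~\ref{theo:1}$(ii)$ and Carlson, but your estimate of $b_n$ is tighter.

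A couple of small inaccuracies in your sketch, neither of which affects the argument. First, the left-line contribution near $x=0$ is in fact much smaller than the $2^n/\sqrt n$ you claim: using $q_n(s)\le s^{n-1}/(n-1)!$ (valid for \emph{all} $s\in[0,n]$, not only $[0,1]$, since the slice $\{t\in[0,1]^n:\sum t_i=s\}$ is contained in $\{t\ge0:\sum t_i=s\}$), one gets
$$
\int_0^n\frac{q_n(s)}{(s-u')^n}\,ds\ \le\ \frac{1}{(n-1)!}\int_0^n\frac{s^{n-1}}{(s+|u'|)^n}\,ds\ \le\ \frac{\log(1+n/|u'|)}{(n-1)!},
$$
so that after multiplying by the Stirling-normalised prefactor $(n/e)^n e^{\rho u'}$ the whole left-line contribution to $|b_n|$ is $O(\sqrt n\,\log n)$ --- absolutely negligible, with no need to shift to a very negative $u'$. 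Second, the parenthetical ``(and, by the symmetry of $q_n$, near $x=n$)'' is spurious: $(n/(e(s-u')))^n$ is monotone decreasing in $s$, so it is large only near $s=0$; there is no matching problem at $s=n$. Finally, once $b_n=0$ for $n>M$, the Carlson step is exactly as in the paper, and indeed $\rho<\log(e+1)<\pi$ is all that is needed.
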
 
We have $\log(2\sqrt{e})\approx 1.193$ while $\log(e+1)\approx 1.313$. In 
the proof, we shall obtain that $(a_n)_{n\geq 0}$ is a polynomial before proving that $F$ is a polynomial. Because of the Perelli-Zannier Theorem recalled before Theorem \ref{theo:recurrence}, the assumptions of  Theorem~\ref{theo:3} are in fact natural in the context of Ruzsa's Conjecture~\ref{conj:ruzsa}. Indeed, for any given $G$-function $f(x):=\sum_{n=0}^\infty v_n x^n \in \Qbar[[x]]$, there exists a function 
$$
\lambda(x) :=  
\sum_{j=1}^p c_j(x)\cdot  e^{\rho_j x},
$$
for some functions $c_j(x)$ analytic in $\Re(x)>u$ and of polynomial growth (at most), and such that $v_n=\lambda(n)$ for all $n>u$; the numbers $e^{-\rho_j }$ are the finite singularities of $f(x)$ (see \cite[\S 7.1]{firi} for details). Notice that a bound involving $e^{\rho_j \Re(x)}$ is {\em a priori} different of a bound involving $\vert e^{\rho_j x}\vert=e^{\Re(\rho_j x)}$, but they are the same when $\rho_j\in \mathbb R$. In particular, if all the singularities of $f(x)$ are positive real numbers, then a bound as in \eqref{eq:2} holds for $\lambda(x)$ for some $\rho\in \mathbb R$.

In Theorem \ref{theo:1}$(iv)$, take $\varphi(n)=\delta^{n}$ with $e<\delta<2\sqrt{e}$. This yields a genuine primary pseudo-polynomial $(a_n)_{n\ge 0}$ such that $a_n =\delta^{n+o(n)}$ as $n\to +\infty$. Theorem \ref{theo:3} thus implies that there is no function $F(x)$ analytic in a right-half plane on which \eqref{eq:2} holds, and such that $a_n=F(n)$ for all large $n$.

Perelli-Zannier's result and Theorem \ref{theo:3} are similar to P\'olya's celebrated theorem: if an entire function 
$F(x)$ is such that 
$$
\limsup_{R\to +\infty} \frac{1}R
\log\max_{\vert x\vert= R}\vert F(x)\vert<\log(2)\quad\textup{and}\quad F(\mathbb N)\subset 
\mathbb Z,
$$ 
then $F(x)$ is a polynomial. See \cite{waldschmidt} for a recent survey on P\'olya type results, where a connection with Rusza's Conjecture~\ref{conj:ruzsa} is also  mentioned.

\medskip

Theorems \ref{theo:1}, \ref{theo:2}, \ref{theo:recurrence} and \ref{theo:3} are proved in \S\ref{sec:prooftheo1}, \S\ref{sec:prooftheo2}, \S\ref{sec:prooftheorec} and \S\ref{sec:prooftheo3} respectively. In \S\ref{sec:prooftheo51}, we present a method to construct a genuine primary pseudo-polynomial starting from every primary pseudo-polynomial generated by a $G$-function (Theorem \ref{theo:51}).

\section{Proof of Theorem \ref{theo:1}}\label{sec:prooftheo1}

$(i)$ Let $(a_n)_{n\geq 0}$ be a sequence of integers and consider its binomial transform $(b_n)_{n\geq 0}$.
\medskip

Assume that for every non-negative integer $n$, $P_n$ divides $b_n$. Let $p$ be a fixed prime number. Hence, for every integer $n\geq p$, $p$ divides $b_n$. For every non-negative integer $n$, it yields
\begin{align*}
a_{n+p}&=\sum_{k=0}^{n+p}\binom{n+p}{k}b_k
\\&\equiv \sum_{k=0}^{p-1}\binom{n+p}{k}b_k\mod p
\\
&\equiv \sum_{k=0}^{p-1}\binom{n}{k}b_k\mod p
\\ 
&\equiv a_n\mod p,
\end{align*} 
where we used Lucas' congruence for binomial coefficients: for every $u,v$ in $\{0,\dots,p-1\}$ and every non-negative integers $m$ and $\ell$, we 
have
$$
\binom{u+mp}{v+\ell p}\equiv\binom{u}{v}\binom{m}{\ell}\mod p.
$$
It follows that $(a_n)_{n\geq 0}$ is a primary pseudo-polynomial.
\medskip

Conversely, assume that $(a_n)_{n\geq 0}$ is a primary pseudo-polynomial. 
Let $p$ be a prime number and $n\geq p$ be an integer. It suffices to show that $p$ divides $b_n$. 

Write $n=v+mp$ with $v$ in $\{0,\dots,p-1\}$ and $m\geq 1$. We obtain that
\begin{align*}
&b_{v+mp}
\\
&=\sum_{k=0}^{v+mp}(-1)^{v+mp-k}\binom{v+mp}{k}a_{k}\\
&=\sum_{u=0}^{p-1}\sum_{\ell=0}^{m-1}(-1)^{v-u}(-1)^{(m-\ell)p}\binom{v+mp}{u+\ell p}a_{u+\ell p}+\sum_{u=0}^v(-1)^{v-u}\binom{v+mp}{u+mp}a_{u+mp}\\
&\equiv\sum_{u=0}^{p-1}\sum_{\ell=0}^{m-1}(-1)^{v-u}(-1)^{(m-\ell)p}\binom{v}{u}\binom{m}{\ell}a_u+\sum_{u=0}^v(-1)^{v-u}\binom{v}{u}a_u\mod 
p\\
&\equiv \sum_{u=0}^v(-1)^{v-u}\binom{v}{u}a_u\sum_{\ell=0}^m(-1)^{(m-\ell)p}\binom{m}{\ell}\mod p \quad \; (\binom{v}{u}=0 \; \textup{for} \; u=v+1, \ldots, p-1)\\
&\equiv \sum_{u=0}^v(-1)^{v-u}\binom{v}{u}a_u(1+(-1)^p)^m\mod p\\
&\equiv 0\mod p\qquad (m\ge 1).
\end{align*}
Hence $p$ divides $b_n$ as expected. It follows that, for every non-negative integer $n$, $P_n$ divides $b_n$. The first equivalence in Theorem \ref{theo:1} is proved.
\medskip

$(ii)$ The binomial transform $(b_n)_{n\geq 0}$ is eventually $0$ if, and 
only if there exists a polynomial $Q(X)$ in $\mathbb{Q}[X]$ such that $a_n=Q(n)$ for every non-negative integer $n$. Hence, if the latter is false, then $(b_n)_{n\geq 0}$ is not eventually $0$ and, since $P_n$ divides $b_n$, it follows that
$$
\liminf_{n\to +\infty}|b_n|^{1/n}\geq e
$$
because $P_n^{1/n} \to e$ as $n\to +\infty.$

\medskip

$(iii)$ If the primary pseudo-polynomial $(a_n)_{n\geq 0}$ is  not a polynomial, then by $(ii)$ above, $\liminf_{n\to +\infty} \vert b_n\vert^{1/n}\ge e$. But since 
$$
b_n=\sum_{k=0}^n (-1)^{n-k}\binom{n}{k} a_k,
$$ 
the assumption $\limsup_{n\to +\infty} \vert a_n\vert^{1/n}<e-1$ implies that 

$$
\limsup_{n\to +\infty} \vert b_n\vert^{1/n}\le \limsup_{n\to +\infty}\bigg(\sum_{k=0}^n \binom{n}{k} \vert a_k\vert\bigg)^{1/n}<e.
$$
This proves that $(a_n)_{n\geq 0}$ is a polynomial.

\medskip

$(iv)$ The following argument generalizes Hall's (sketchy) construction of a genuine pseudo-polynomial with growth $\le e^{n+o(n)}$ in \cite[p.~76]{hall}. By $(i)$, we know that any sequence of integers $(B_n)_{n\ge 0}$ such that $P_n$ divides $B_n$ defines a primary pseudo-polynomial $A_n:=\sum_{k=0}^n \binom{n}{k}B_k$, which is not a polynomial 
if (and only if) $B_n\neq 0$ for infinitely many $n$. Since  $A_ {n-1}$ depends only on $B_0, B_1, \ldots, B_{n-1}$, we will recursively construct 
$B_{n} \neq 0$ and thus $A_{n}$. We have $A_0=B_0$: choosing $B_0=1$, 
we have $\varphi(0)=A_0 =  \varphi(0)+2P_0$. Let $n\ge 0$ and let us assume that we have constructed $B_0, B_1, \ldots, B_{n-1}$ all non-zero and such that $P_{k}$ divides $B_{k}$ and $\varphi(k)\le A_k\le \varphi(k)+2P_{k}$ for all $k\in \{0, \ldots, n-1\}$. 

We want to construct an integer $B_{n}\neq 0$ such that $P_{n}\mid B_{n}$ and $\varphi(n)\le  A_{n}\le \varphi(n)+2P_n$. To do this, we first set 
$C_n:=\sum_{k=0}^{n-1} \binom{n}{k}B_k$, so that we will have $A_{n}=B_{n}+ C_n$. We now perform the euclidean division of $C_n$ by $P_{n}$: we have $C_n=u_nP_{n}+v_n$ with $u_n\in \mathbb Z, v_n\in\mathbb N$ and $0\le v_n<P_{n}$. We set $B_{n}:=w_nP_n \neq 0$ where the non-zero integer $w_n$ is defined as follows: if $\lceil \frac{\varphi(n)-v_n}{P_{n}}\rceil \neq u_n$, we take 
$$
w_n = \left\lceil \frac{\varphi(n)-v_n}{P_{n}}\right\rceil -u_n,
$$
while if $\lceil \frac{\varphi(n)-v_n}{P_{n}}\rceil =u_n$, we take $w_n=1$. Since  $A_{n}=(u_n+w_n)P_{n}+v_n$, we see that 
$\varphi(n)\le A_{n}\le \varphi(n)+P_{n}$ in the former case, while $\varphi(n)+P_{n} \le A_n \le \varphi(n)+2P_{n}$ in the latter case. This finishes the recursive construction of a genuine primary pseudo-polynomial $(A_n)_{n\ge 0}$ such that $\varphi(n)\le A_n\le \varphi(n)+2P_{n}$ for all integers $n\ge 0$.

\section{Proof of Theorem \ref{theo:2}} \label{sec:prooftheo2}

From the proof of Theorem \ref{theo:1}$(i)$, we see that for any given prime number $p$, the assertions ``for all $n\ge 0$, $a_{n+p}\equiv a_n \mod p$'' and ``for all $n\ge p$, $p$ divides $b_n$'' are equivalent. It follows that the assertions ``for all $p\in \mathcal{P}$ and all $n\ge 0$, $a_{n+p}\equiv a_n\mod p$'' and ``for all $p\in \mathcal{P}$ and all $n\ge p$, $p$ divides $b_n$'' are equivalent, where $\mathcal{P}$ is a same set of prime numbers, and this  generalizes Theorem \ref{theo:1}$(i)$. We shall in fact prove  Theorem~\ref{theo:2} under the weaker assumption that there exists an infinite set $\mathcal{P}$ of prime numbers such that for all $p\in \mathcal{P}$ and all $n\ge 0$, $a_{n+p}\equiv a_n\mod p$. 

\medskip

Given $u\in \mathbb Z$ and a prime number $p$, we set $u_{\vert p}:= u \mod p$. Given a power series $F(x):=\sum_{n=0}^\infty u_n x^n$ with integer coefficients, we set 
$$
F_{\vert p}(x):=\sum_{n=0}^\infty u_n{}_{\vert p} x^n \in \mathbb{F}_p[[x]].
$$

\medskip

We shall first prove $(ii)$ for the series $f_b(x):=\sum_{n=0}^\infty 
b_n x^n$.  Let $\mathcal{P}$ denote an infinite set of prime numbers such 
that for all $n\ge 0$ and all $p\in \mathcal{P}$, we have $a_{n+p}\equiv a_n\mod p$. As already said, this is equivalent to the fact that for  all 
$p\in \mathcal{P}$ and all $n\ge p$, $p$ divides $b_n$. It follows in particular that for any $p\in \mathcal{P}$, $f_b{}_{\vert p}(x)$ is a polynomial in $\mathbb{F}_p[x]$ of degree at most $p-1$. For simplicity, we denote by $Q_p(x)$ this polynomial, and by $q_p$ its degree.

Let us now assume that $f_b(x)$ is algebraic over $\mathbb Q(x)$. If $f_b(x)$ is a constant, there is nothing  else to prove. We now assume that $f_b(x)$ is not a constant so that it has degree $d\ge 1$. There exist an integer $\delta\in\{1, \ldots, d-1\}$, some integers $0\le j_1<j_2<\ldots 
<j_{\delta}\le d-1$, and some polynomials $A_d(x)$, $A_{j_1}(x), \ldots, A_{j_{\delta}}(x)\in \mathbb Z[x]$ all not identically zero such that
\begin{equation}\label{eq:alg}
A_df_b^d = \sum_{\ell=1}^\delta A_{j_\ell}f_b^{j_\ell}
\end{equation}
in $\mathbb Z[[x]]$.

We fix $p\in \mathcal{P}$ such that $p>H$ where $H$ is the maximum of the 
modulus of the coefficients of $A_d(x)$, $A_{j_1}(x), \ldots, A_{j_{\delta}}(x)$. It follows that 
\begin{equation}\label{eq:deg}
\deg(A_d{}_{\vert p})=\deg(A_d),\; \deg(A_{j_1}{}_{\vert p})=\deg(A_{j_1}),\; \ldots, \;\deg(A_{j_\delta}{}_{\vert p})=\deg(A_{j_\delta}). 
\end{equation}
We deduce from the reduction of \eqref{eq:alg} mod $p$  that 
\begin{equation}\label{eq:alg2}
A_d{}_{\vert p}Q_p^d = \sum_{\ell=1}^\delta A_{j_\ell}{}_{\vert p}Q_p^{j_\ell}
\end{equation}
in $\mathbb F_p[[x]]$, and in fact in $\mathbb F_p[x]$ because $Q_p(x)\in 
\mathbb F_p[x]$.

\medskip 

\noindent Case 1). If $Q_p$ is identically zero, this means that $p$ divides the coefficients $b_n$ for all $n\ge 0$.

\smallskip
 
\noindent Case 2). If $Q_p$ is not identically zero, we deduce from \eqref{eq:deg} and \eqref{eq:alg2} that 
\begin{align*}
\deg(A_d)+dq_p &\le \max\big(\deg(A_{j_1})+j_1q_p,\deg(A_{j_2})+j_2q_p, \ldots, \deg(A_{j_\delta})+j_\delta q_p\big)
\\
&\le \max\big(\deg(A_{j_1}), \deg(A_{j_2}), \ldots, \deg(A_{j_\delta})\big) + (d-1) q_p.
\end{align*}
Hence
$$
q_p\le \max\big(\deg(A_{j_1}), \deg(A_{j_2}), \ldots, \deg(A_{j_\delta})\big)-\deg(A_d)=:N.
$$
It follows that for any $n>N$, $p$ divides $b_n$, where $N$ is {\em independent} of $p$. 

\noindent Since $p\in \mathcal{P}$ was simply assumed larger than a quantity $H$ depending only on $f_b$, the conclusion of Case 1 and Case 2 is that for any $p\in \mathcal{P}$ such that $ p > H$ and any $n>N$, $p$ divides~$b_n$. Since $\mathcal{P}$ is infinite, $b_n$ is divisible by infinitely many primes when $n>N$. Hence $b_n=0$ for all $n>N$ and $f_b(x)=\sum_{n=0}^N b_nx^n\in \mathbb Z[x]$, as expected.

\medskip

Let us now prove $(i)$ in the case $m=0$. If $f_a(x)$ is algebraic over $\mathbb Q(x)$, then $f_b(x)$ as well by \eqref{eq:fabalg}. Hence $f_b(x)\in \mathbb Z[x]$ by $(ii)$ just proven, \textit{i.e.} there exists an integer $M$ such that 
$b_n=0$ if $n>M$. Since, for all $n\ge 0$, we have
$$
a_n=\sum_{k=0}^{n} \binom{n}{k}b_k=\sum_{k=0}^{\min(n,M)} \binom{n}{k}b_k,  
$$
it follows that for all $n\ge M$, we have $a_n=Q(n)$ with $Q(X)=\sum_{k=0}^{M} \binom{X}{k}b_k \in \mathbb Q[X]$.

\bigskip

We now prove $(i)$ for any integer $m\ge 0$. We need the following simple lemma.
\begin{lem} \label{lem:1412} Let $R(X)\in \mathbb Q(X)$ be such that $R(n)\in \mathbb Z$ for infinitely many integers. Then, $R(X)\in \mathbb Q[X]$.
\end{lem}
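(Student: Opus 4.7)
My plan is to reduce $R(X)$ to its proper part by Euclidean division and then show that this proper part must vanish identically.

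First, I would perform the Euclidean division of the numerator of $R$ by its denominator in $\Q[X]$, writing
\[
R(X)=S(X)+\frac{T(X)}{Q(X)},
\]
with $S,T,Q\in\Q[X]$, $\gcd(T,Q)=1$, and $\deg T<\deg Q$. If $Q$ is a non-zero constant, then $R=S+T/Q\in \Q[X]$ and the conclusion holds. So I may assume $\deg Q\ge 1$, and the goal becomes to prove $T=0$.

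The key analytic observation is that since $\deg T<\deg Q$, one has $T(n)/Q(n)\to 0$ as $n\to+\infty$. Let $D\ge 1$ be a common denominator for the (finitely many) coefficients of $S$, so that $D\cdot S(n)\in\Z$ for every integer $n$. Then choose $n_0$ large enough so that $Q(n)\neq 0$ and $|T(n)/Q(n)|<1/(2D)$ for all $n\ge n_0$. By assumption, there are infinitely many integers $n\ge n_0$ with $R(n)\in\Z$; for any such $n$, the quantity
\[
D\bigl(R(n)-S(n)\bigr)=\frac{D\,T(n)}{Q(n)}
\]
is an integer of absolute value strictly less than $1/2$, hence zero. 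Consequently $T(n)=0$ for infinitely many integers $n$, and since $T$ is a polynomial, $T=0$. Therefore $R=S\in\Q[X]$.

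There is no real obstacle here; the only points requiring a little care are ensuring that $S$ has a common denominator (so that $D\cdot S(n)$ is automatically integral at integer arguments) and that one excludes the finitely many zeros of $Q$ before invoking the asymptotic vanishing of $T/Q$. Both are handled by choosing $n_0$ sufficiently large.
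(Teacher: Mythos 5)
Your proof is correct and follows essentially the same route as the paper: Euclidean division to split $R$ into a polynomial part plus a proper fraction, clearing denominators, and using that the proper part tends to $0$ to force it to vanish at infinitely many integers, hence identically. The only cosmetic difference is that you multiply by a common denominator $D$ of $S$ and bound $|T(n)/Q(n)|<1/(2D)$, whereas the paper multiplies through by a $w$ clearing denominators of both $U$ and $V$; the argument is the same.
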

\begin{proof} We write $R=A/B$ with $A,B\in \mathbb Q[X]$. We assume that $\deg(B)\ge 1$ otherwise there is nothing to prove. There exist $U,V\in \mathbb Q[X]$ such that $A=UB+V$ and $\deg(V)<\deg(B)$. Let $w\in \mathbb Z\setminus\{0\}$ be such that $wU,wV\in \mathbb Z[X]$. Let $\mathcal{N}$ be the infinite set of integers $n$ such that $R(n)\in \mathbb Z$; without loss of generality, we can assume that $\mathcal{N}$ contains infinitely many positive integers.  For every $n\in \mathcal{N}$, we have 
$wV(n)/B(n)=wR(n)-wU(n) \in \mathbb Z$. But $\lim_{x\to +\infty}wV(x)/B(x)=0$. Hence there exists $M$ such that $n\in \mathcal{N}$ and $n\ge M$ imply that $wV(n)/B(n)=0$. Therefore, $wV$ has infinitely many roots: it must be the null polynomial, so that $R=U\in \mathbb Q[X]$.\end{proof}

Now, we have 
$$
f_a^{(m)}(x)=\sum_{n=0}^\infty (n+m)(n+m-1)\cdots (n+1)a_{n+m} x^{n}.
$$
Since $(a_n)_{n\ge 0}$ is a primary pseudo-polynomial, this is also the case of 
$$
((n+m)(n+m-1)\cdots (n+1)a_{n+m})_{n\ge 0}
$$ 
because it is a product of two primary pseudo-polynomials. Since $f_a^{(m)}(x)$ is algebraic over $\mathbb Q(x)$, $((n+m)(n+m-1)\cdots (n+1)a_{n+m})_{n\geq 0}$ is a polynomial by the already proven case $m=0$ of Theorem~\ref{theo:2}$(i)$. Hence $(a_{n+m})_{n\geq 0}$ is a rational fraction, so that by Lemma \ref{lem:1412}, $(a_{n+m})_{n\geq 0}$ is a polynomial. This completes the proof of Case $(i)$.

\section{Proof of Theorem \ref{theo:recurrence}} \label{sec:prooftheorec}

The proof of the Perelli-Zannier Theorem is based on the following lemma proved in \cite{pz2}. We shall also use it. 
\begin{lem} For $\underline{k}=(k_j)_{j\ge 0} \in (\mathbb{R}^+)^{\mathbb N}$, an integer $N\ge 1$, set 
$$
A(N,\underline{k}):= \{(x_1, \ldots, x_N) \in \mathbb Z^{N} : \vert x_j\vert \le k_j \quad \textup{and}\quad \forall p, \forall n\le N-p, \; x_{n+p}\equiv x_n \mod p\}.
$$
Then 
$$
\#A(N,\underline{k}) \le \prod_{j=1}^N\left(1+\frac{2k_j}{P_{j-1}} \right).
$$
\end{lem}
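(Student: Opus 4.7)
The plan is to count tuples coordinate by coordinate, bounding the number of admissible values for $x_j$ given $x_1,\ldots,x_{j-1}$, and then multiplying these bounds. The key observation is that once $x_1,\ldots,x_{j-1}$ are fixed, the congruence conditions force $x_j$ to lie in a single residue class modulo $P_{j-1}$, which limits how many integers in $[-k_j,k_j]$ are admissible at step~$j$.

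More precisely, first I would rewrite the set of congruences satisfied by $(x_1,\ldots,x_N)\in A(N,\underline{k})$ as follows: for each prime $p$ and each $n$ with $1\le n\le N-p$, we have $x_{n+p}\equiv x_n\bmod p$. Fixing $j\in\{1,\ldots,N\}$ and setting $n=j-p$, the constraints that involve $x_j$ and previously chosen coordinates are exactly $x_j\equiv x_{j-p}\bmod p$ for every prime $p\le j-1$. Next, since the primes $p\le j-1$ are pairwise coprime, the Chinese Remainder Theorem shows that these simultaneous congruences pin down $x_j$ modulo $P_{j-1}=\prod_{p\le j-1}p$ (with the conventions $P_0=P_1=1$, so there is no constraint when $j\in\{1,2\}$). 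Consequently, the admissible values of $x_j$ lie in a single arithmetic progression of common difference $P_{j-1}$, and the number of integers from such a progression that belong to the interval $[-k_j,k_j]$ of length $2k_j$ is at most
\begin{equation*}
\left\lfloor\frac{2k_j}{P_{j-1}}\right\rfloor+1\le 1+\frac{2k_j}{P_{j-1}}.
\end{equation*}

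Finally, I would combine these step-by-step bounds: running through $j=1,2,\ldots,N$ and using the product rule for counting, one gets
\begin{equation*}
\#A(N,\underline{k})\le\prod_{j=1}^{N}\left(1+\frac{2k_j}{P_{j-1}}\right),
\end{equation*}
which is the claimed estimate. There is no real obstacle here; the only mild subtlety is to check that the base cases $j=1,2$ do match the formula (indeed, $P_0=P_1=1$ and no congruence constrains $x_1$ or $x_2$, so the factors $1+2k_1$ and $1+2k_2$ correctly upper-bound the number of integers in $[-k_j,k_j]$), and to note that we do not need the CRT system to be consistent globally: the argument only uses that each admissible tuple must lie in the \emph{product} of the finitely many allowed residue classes, which suffices for an upper bound.
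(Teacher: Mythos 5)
Your proof is correct. The paper itself does not prove this lemma; it cites Perelli--Zannier~\cite{pz2} for it, so there is no in-paper argument to compare against. Your coordinate-by-coordinate count is the natural (and, as far as I recall, essentially the original) argument: for each $j$, the backward congruences $x_j\equiv x_{j-p}\bmod p$ for primes $p\le j-1$ constrain $x_j$ to a single residue class modulo $P_{j-1}$ once $x_1,\dots,x_{j-1}$ are fixed, and an interval of length $2k_j$ contains at most $1+2k_j/P_{j-1}$ integers from that class; multiplying over $j$ gives the bound, with the forward constraints $x_{j+p}\equiv x_j$ being absorbed at the later steps. Your closing caveat about global consistency is harmless but slightly misplaced: the simultaneous congruences $x_j\equiv x_{j-p}\bmod p$ are over pairwise distinct prime moduli, so by CRT they are always consistent and define exactly one class modulo $P_{j-1}$; the only way the count at step $j$ can drop to zero is if that class meets $[-k_j,k_j]$ in no integer, and the bound $1+2k_j/P_{j-1}\ge 1$ covers that trivially.
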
 
Let $R\ge 1, H\ge 0, D\ge 1$ be integers. 
Let $q_0, \ldots, q_R \in \mathbb Z[X]$ with $\max_{j} H(q_j)\le H$ and $\max_j \deg(q_j)\le D-1$. Considering the coefficients of the $q_j$'s as indeterminates, there are $(2H+1)^{RD}$ functions $F$ of the form 
\begin{equation}\label{eq:Frec}
F(n):=\sum_{j=0}^{R-1} q_{j}(n) a_{n+j}.
\end{equation}
Such a function satisfies  $\vert F(n)\vert \le cRDH(n+1)^D\delta^{n+R}$ for all $n\ge 0$ and $F(n+p)\equiv F(n) \mod p$ for all prime number $p$ and all $n\ge 0$. Hence for all $N\ge 1$, $(F(0), \ldots, F(N-1))\in A(N,\underline{K})$ where 
$K_j:=cRDHj^D\delta^{j+R-1}$. Therefore, given $N$, if 
\begin{equation}\label{eq:prod1}
\prod_{j=1}^N\left(1+\frac{2K_j}{P_{j-1}}\right) < (2H+1)^{RD},
\end{equation}
there exists {\em two} different functions $F_1$ and $F_2$ of the form \eqref{eq:Frec} such that $F_1(n)=F_2(n)$ for all $n\in\{0,1, \ldots, N-1\}$. Hence the function $G_N:=F_1-F_2$ is of the form \eqref{eq:Frec} 
$$
G_N(n)=\sum_{j=0}^{R-1} q_{j}(n) a_{n+j},
$$ 
with $q_0(X), \ldots, q_{R-1}(X)\in \mathbb Z[X]$ not all identically zero, with $G_N(n)=0$ for every integer $n$ in $\{0,1, \ldots, N-1\}$ and
$\vert G_N(n)\vert \le 2cRDH(n+1)^D\delta^{n+R}$ for all $n\ge 0$. Note that $G_N$ depends on $N$ which is fixed but can be as large as desired in this construction.

Eq.~\eqref{eq:prod1} holds if we assume the stronger condition
\begin{equation}\label{eq:prod2}
\prod_{j=1}^\infty\left(1+\frac{2K_j}{P_{j-1}}\right) \le H^{RD},
\end{equation}
because the assumption $\delta<e$ implies the convergence of the product
$$
\Phi(D,x):=\prod_{j=1}^\infty\left(1+x\frac{j^D\delta^j}{P_{j-1}}\right) 
$$
for all $x\ge 0$ and all $D\ge 0$, and obviously $1+x\frac{j^D\delta^j}{P_{j-1}}\ge 1$. We shall provide an upper bound for $\Phi(D,x)$ in \S\ref{ssec:phidx}, from which we shall deduce values of $H,R$ and $D$ such that \eqref{eq:prod2} holds. It is important to observe here that \eqref{eq:prod2} does not depend on $N$.

\subsection{Proof that $G_N(n)=0$ for all $n\ge 0$}

Following the Perelli-Zannier method, we now want to prove that, provided 
$N$ is large enough, $G_N(n)$ vanishes for all $n\ge 0$. Assume this is not the case. Then for every $N$,  let $M_N\ge N$ denote the largest integer such that $G_N(0)=G_N(1)=\ldots=G_N(M_N)=0$ but $G_N(M_N+1)\neq 0$. We fix $\alpha$ in $\big(0, \frac{2}{\log(\delta)}-2\big)$. 
\medskip

We shall first prove that $G_N(m)=0$ for $m$ in $I:=[2M_N, (2+\alpha)M_N]$. Let $m\in I$. First assume that $p$ is a prime and $p<M_N$: since $G_N(0)=\cdots= G_N(p)=0$ and $G_N(n+p)\equiv G_N(n) \mod p$, $p$ divides $G_N(m)$. Assume now that $M_N\le p\le m/2$, so that 
$$
0\le m-2p\le m-2M_N\le \alpha M_N \le M_N,
$$
hence $G_N(m)\equiv G_N(m-2p)=0 \mod p$. Assume to finish that $m-M_N< p\le m$ (such primes have not yet been considered) so that $0\le m-p<M_N$ 
and $G_N(m)\equiv G_N(m-p)\equiv 0 \mod p$. It follows that $G_N(m)$ si divisible by $P_{m/2}P_m/P_{m-M_N}$.

Therefore, if $G_N(m)\neq 0$ for some $m\in I$, then 
$$
\vert G_N(m)\vert \ge e^{m/2+M_N+o(M_N)} \ge e^{2M_N+o(M_N)},
$$
where $o(M_N)$ denotes a term such that $o(M_N)/M_N$ becomes arbitrarily small when $N$ is taken arbitrarily large. But on the other hand, we know 
that, for any $m\in I$, 
$$
\vert G_N(m)\vert \le 2cRDH(m+1)^D\delta^{m+R}\le 2cRDH (2M_N+\alpha M_N+1)^D\delta^{R} \delta^{(2+\alpha) M_N}.
$$
We recall that we assume that $H,R$ and $D$ are such that \eqref{eq:prod2} holds, which is independent of~$N$. Hence we can let $N\to +\infty$, hence {\em a fortiori} $M_N\to +\infty$ so that the above lower and upper bounds for $G_N(m)\neq 0$ imply that $e^2\le \delta^{2+\alpha}$, \textit{i.e.} that 
$$
\alpha\ge \frac{2}{\log(\delta)}-2, 
$$
which is contrary to the assumption on $\alpha$. Hence, provided $N$ is large enough, we have $G_N(m)=0$ for all $m$ in $I$.

We thus have $G_N(m)=0$ for all integers $m$ in $[0, \ldots, M_N]$ or $[2M_N, (2+\alpha) M_N]$. It follows that for any $p\le (1+\alpha)M_N-1$, $p$ divides $G_N(M_N+1)$. Indeed, if $p \le M_N$, we  write $M_N+1=n+p$ 
for some $n\le M_N-2$ so that $G_N(M_N+1)\equiv G(n)=0\mod p$, while if 
$M_N<p\le (1+\alpha)M_N-1$, we have $0=G_N(M_N+1+p) \equiv G(M_N+1) \mod p$ because $M_N+1+p\in [2M_N, (2+\alpha) M_N]$. Hence, because $G_N(M_N+1)\neq 0$, we have 
$$
\vert G_N(M_N+1)\vert \ge P_{(1+\alpha) M_N-1} \ge e^{(1+\alpha)M_N+o(M_N)}.
$$
On the other hand, 
$$
\vert G_N(M_N+1) \vert \le  c2RDH (M_N+2)^D\delta^{M_N+1+R}. 
$$
As above, we take $N$ large enough so that these two bounds imply that $\alpha\le\log(\delta)-1$, which is impossible because $\log(\delta)-1<0$ while $\alpha$ was chosen positive.

Therefore, there is no such $M_N$ such that $G(M_N+1)\neq 0$, so that $G(n)=0$ for all integer $n\ge 0$.

\subsection{Upper bound for $\Phi(D,x)$}\label{ssec:phidx}

In this section, $x>0$ is a fixed real parameter and $D\ge 1$ is a fixed integer. We fix $\varepsilon>0$ such that $\delta<e-\varepsilon$, and we let $\omega=\delta/(e-\varepsilon)<1$. By the Prime Number  Theorem, we have $P_{j-1}\ge (e-\varepsilon)^{j}$ for all $j> J=J(\varepsilon)$ so that
$$
\Phi(D,x):= \prod_{j=1}^\infty \left(1+xj^D\frac{\delta^j}{P_{j-1}}\right)\le \prod_{j=1}^J \left(1+xj^D\frac{\delta^j}{P_{j-1}}\right) \prod_{j=J+1}^\infty \left(1+xj^D\omega^j\right).
$$
We have
\begin{align*}
\prod_{j=1}^J \left(1+xj^D\frac{\delta^j}{P_{j-1}}\right)\leq \prod_{j=1}^J \left(1+xj^D\delta^j\right)
\leq\prod_{j=1}^J \left(1+(1+x)j^D\delta^j\right)
\leq \prod_{j=1}^J \left(2(1+x)j^D\delta^j\right),
\end{align*}
since $(1+x)j^D\delta^j\geq 1$ for every $j$ in $\{1,\dots,J\}$. Hence, 
$$
\prod_{j=1}^J \left(2(1+x)j^D\delta^j\right)=2^J(1+x)^JJ!^D\delta^{J(J+1)/2}\leq 2^J(1+x)^JJ!^D\delta^{J^2}.
$$
In addition, we have
$$
\prod_{j=J+1}^\infty \left(1+xj^D\omega^j\right)\leq \prod_{j=1}^\infty \left(1+xj^D\omega^j\right),
$$
which yields 
$$
\Phi(D,x)\leq 2^J(1+x)^JJ!^D\delta^{J^2}\prod_{j=1}^{\infty} \left(1+xj^D\omega^j\right).
$$

We now bound the infinite product $\Psi(D,x):=\prod_{j=1}^{\infty} \left(1+xj^D\omega^j\right)$. The maximum of the function $t\mapsto t^D \omega^{t/2}$ is $m(D):=(2D/(e\log(1/\omega)))^D$, attained at $j_0:=2D/\log(1/\omega)$. Hence, for all $j\ge j_0$, we have $j^D\le m(D)\omega^{-j/2}$. Moreover, $t\mapsto t^D \omega^{t/2}$ is increasing on $[0,j_0]$ and $m(D)\le j_0^D$. Hence,  
\begin{align*}
\Psi(D,x) &\le \prod_{1\le j <j_0} \left(1+xj^D\omega^j\right) \prod_{j\ge j_0}^\infty \left(1+xm(D)\omega^{j/2}\right)
\\
&\le \left(1+(1+x) j_0^D\right)^{\lfloor j_0\rfloor} \prod_{j=1}^\infty 
\left(1+xm(D)\omega^{j/2}\right)
\\
&\le \left(2(1+x)j_0^D \right)^{\lfloor j_0\rfloor} \prod_{j=1}^\infty \left(1+xj_0^D\omega^{j/2}\right).
\end{align*}
We now bound $\prod_{j=1}^\infty\left(1+xj_0^D\omega^{j/2}\right)$. We set $y:=xj_0^D$. Since $t\mapsto \omega^t$ is decreasing on $[0,\infty)$, we have
\begin{align*}
\log\left(\prod_{j=1}^\infty \left(1+y\omega^{j/2}\right) \right)
&\le \int_0^{+\infty} \log(1+y\omega^{t/2}) dt 
\\
& \leq \frac{2}{\log(1/\omega)}\int_0^y \frac{\log(1+u)}u du,  \quad (u:=y \omega^{t/2})
\\
& \leq -\frac{2\textup{Li}_2(-y)}{\log(1/\omega)}.
\end{align*}
Here, we use the dilogarithm $\textup{Li}_2(z):=-\int_0^z \log(1-x)/x dx$ defined for $z\in \mathbb C\setminus (1, +\infty)$ using the principal 
branch of $\log$ in the integral;  see \cite[p. 1, (1.4)]{lewin}. Here, we want to use it  for large negative values $-y$. For this, we use the identity (see \cite[p. 4, (1.7)]{lewin})
$$
\textup{Li}_2(-y) =  - \frac{1}{2}\log(y)^2-\textup{Li}_2(-1/y) -\zeta(2), \quad y>0
$$
which yields
$$
-2\textup{Li}_2(-y) \leq \log(y)^2+4\zeta(2), \quad y\geq 1,
$$
because $ \textup{Li}_2(-1/y) +\zeta(2)  \le 2\zeta(2)$ when $y\ge 1$. We 
obtain that for $y\ge 1$, 
$$
\prod_{j=1}^\infty \left(1+y\omega^{j/2}\right) \le c_0 e^{\log(y)^2/\log(1/\omega)},
$$
with $c_0:=\exp(4\zeta(2)/\log(1/\omega))\ge 1$.

Putting all the pieces together, we finally obtain the bound
\begin{equation}\label{eq:majPhi}
1\le \Phi(D,x) \le 2^J(1+x)^J J!^D \delta^{J^2} (2(1+x)j_0^D)^{\lfloor j_0\rfloor}c_0e^{\log(xj_0^D)^2/\log(1/\omega)},
\end{equation}
where we recall that $\omega=\delta/(e-\varepsilon)$ where $\varepsilon 
> 0$ is such that $\delta< e-\varepsilon$.

\subsection{Conclusion of the proof} \label{ssec:conclusion1}

For ease of reading, we set $d,r,h$ for $D,R,H$. We want to find conditions on $d,r$ and $h$ such that $\Phi(d,x)\le h^{rd}$ when $x=2crdh\delta^{r-1}$ (which corresponds to \eqref{eq:prod2}). It will be enough to find conditions on $d,r,h$ and $\varepsilon$ such that the right-hand side of \eqref{eq:majPhi} is $\le h^{rd}$. 

From now on, we set $\ell:=\log(\delta)<1$. We assume that $d$ and $\rho$ depend on $\ell$ but are independent of $h$,  and we let $r:=\lfloor \rho\log(h)\rfloor+1$. Since $J$ and $j_0$ are also fixed, when $h\to +\infty$, 
we have
$$
\log \left(c_02^J(1+x)^J J!^d \delta^{J^2} (2(1+x)j_0^d)^{\lfloor j_0\rfloor}e^{\log(xj_0^d)^2/\log(1/\omega)}\right) \sim \frac{(1+\rho\ell)^2}{\log(1/\omega)} \log(h)^2,
$$
while 
$$
\log(h^{rd}) \sim d\rho\log(h)^2.
$$
Hence for our goal, it suffices to choose $d,\rho$ and $\varepsilon$ such 
that
\begin{equation}\label{eq:rho1}
\frac{(1+\rho\ell)^2}{\log(1/\omega)} < d\rho.
\end{equation}

Recall that $\omega=\delta/(e-\varepsilon)$ so that $\log(1/\omega)\to 1-\ell$ as $ \varepsilon\to 0$. So, by choosing $d$ and $\rho$ such that
\begin{equation}\label{eq:rho2}
\frac{(1+\rho\ell)^2}{1-\ell}\leq d\rho,
\end{equation}
we can choose $\varepsilon>0$ such that Eq.~\eqref{eq:rho1} holds true. Eq.~\eqref{eq:rho2} is equivalent to
$$
\ell^2\rho^2+(2\ell-d(1-\ell))\rho+1 \leq 0,
$$
which defines a polynomial in $\rho$ whose discriminant is $\Delta:=d(1-\ell)(d(1-\ell)-4\ell)$. Taking $d:=\max(1,\lceil\frac{4\ell}{1-\ell}\rceil)$ ensures that \eqref{eq:rho2} holds true for any choice of $\rho>0$ in
$$
\left[\frac{d(1-\ell)-2\ell-\sqrt{\Delta}}{2\ell^2},\frac{d(1-\ell)-2\ell+\sqrt{\Delta}}{2\ell^2} \right].
$$
For simplicity, we also restrict $\rho$ to be $\le 1/\ell$ which is possible because the product of those roots is $1/\ell^2$ and, since $d(1-\ell)\geq 4\ell$, we have 
\begin{equation*}
\frac{d(1-\ell)-2\ell+\sqrt{\Delta}}{2\ell^2}\geq\frac{1}{\ell}\quad\textup{and}\quad\frac{d(1-\ell)-2\ell-\sqrt{\Delta}}{2\ell^2}\leq\frac{1}{\ell}.
\end{equation*}
With such choices of $\varepsilon$, $d$ and $\rho$, we now define $H(c,\delta)$ as the smallest integer $h\ge 1$ such that
\begin{equation}\label{eq:defHdelta}
c_02^J(1+x)^J J!^d \delta^{J^2} (2(1+x)j_0^d)^{\lfloor j_0\rfloor}e^{\log(xj_0^d)^2/\log(1/\omega)}  \le h^{rd},
\end{equation}
where $x=2crdh\delta^{r-1}$. 
We then obtain \eqref{eq:upperbounds} with $\max \deg(p_j)\le d-1=\max(0,\lceil\frac{5\log(\delta)-1}{1-\log(\delta)}\rceil)$ and $S\le r-1\le \lfloor \rho\log(h)\rfloor\le  \log(H(c,\delta))/\ell$. Notice that $H(c,\delta)$ also depends on the choice of $\rho$ and we now explain how to bound it.
\medskip

The left-hand side of \eqref{eq:defHdelta} is an increasing function of $h\ge 1$, which appears in the expressions of $r:=\lfloor \rho \log(h)\rfloor+1$ and $x:=2crdh\delta^{r-1}$. Hence, 
$$
H(c,\delta)^{d(1+\lfloor \rho\log(H(c,\delta))\rfloor)} 
$$
is larger than the value $A$ (which is $\ge 1$) of the left-hand side of \eqref{eq:defHdelta} at $h=1$, in which case 
$r=1$ and $x=2cd$. It follows that 
$\log(A)\le d\log(H(c,\delta))+d\rho\log(H(c,\delta))^2$, so that  
\begin{equation}\label{eq:boundlowH}
H(c,\delta)\ge \exp\Big(\frac{\sqrt{d^2+4d\rho\log(A)}-d}{d\rho}\Big).
\end{equation}
Since $A\to +\infty$ when $\delta\to e$ and $\varepsilon\to 0$ (because of the term $1/\log(\omega)$), $H(c,\delta)$ can be very large.

We now explain how to bound $Y:=H(c,\delta)$ from above. We assume that $H(c,\delta)\ge 2$ otherwise there is nothing else to do.
The left-hand side of \eqref{eq:defHdelta} is greater than or equal to~$1$, so that we can take the logarithms of both sides. After some transformations, we obtain a function $S(h)\ge 0$ for all $h\ge 1$ (which could be explicited) such that $Y$ is the smallest integer $h\ge 1$ such that 
\begin{equation}\label{eq:1012}
S(h)\le \left(\rho d-\frac{(1+\rho\ell)^2}{\log(1/\omega)}\right)\log(h)^2.
\end{equation}
Recall that 
$$
\gamma:=\rho d-\frac{(1+\rho\ell)^2}{\log(1/\omega)}>0.
$$ 
Moreover, there exist $\alpha$ and $\beta$ that depend on $\rho, d, \delta$ and $\varepsilon$ (and could be explicited as well) such that $S(h)\le \alpha \log(h)+\beta$ for all $h\ge 1$. Since $Y\ge 2$ is the smallest integer such that \eqref{eq:1012} holds, we have 
$$
\gamma\log(Y-1)^2<S(Y-1)\le \alpha \log(Y-1)+\beta.
$$
Hence, $\log(Y-1)$ is smaller than the largest solution of the quadratic equation 
$$
\gamma X^2- \alpha X-\beta=0, 
$$
so that finally
\begin{equation}\label{eq:boundupH}
H(c,\delta) \le 1+\exp\left( \frac{\alpha+\sqrt{\alpha^2+4\beta\gamma}}{2\gamma}\right).
\end{equation}

\section{Proof of Theorem \ref{theo:3}} \label{sec:prooftheo3}

Let $F(z)$ be as in the theorem such that $F(n)=a_n$ for all $n>u$. Notice that $\widetilde{a}_n:=a_{n+\lceil  u \rceil+2}$, $n\ge0$, is a primary peudo-polynomial; it is a polynomial if and only if $(a_n)_{n\geq 0}$ is a polynomial. The function $\widetilde{F}(z):=F(z+\lceil  u \rceil+2)$ is analytic in $\Re(z)>-2$,  satisfies $\vert \widetilde{F}(z)
\vert \le \tilde{c}\cdot \exp(\rho \Re(z))$ in $\Re(z)>-2$  for some constant $\tilde{c}>0$, and $\widetilde{a}_n=\widetilde{F}(n)$ for all $n\ge 0$. Moreover, $F(z)$ is  in $\mathbb Q[z]$ if and only if $\widetilde{F}(z)$ is in $\mathbb Q[z]$. Therefore, without loss of generality, we can 
and will assume that $F(z)$ is analytic in $\Re(z)>-2$ and that $F(n)=a_n$ for all integers $n\ge 0$.

Let $\mathcal{C}_n$ denote the circle of center $n$ and radius $n$ oriented in the direct sense. The function $F(z-1)$ being analytic in $\Re(z)>-1$, the residue theorem yields
$$
\frac{(n-1)!}{2i\pi}\int_{\mathcal{C}_n} \frac{F(z-1)}{(z-1)\cdots (z-n)}dz=\sum_{k=0}^{n-1}(-1)^{n-k-1}\binom{n-1}{k} a_k:=b_{n-1}\in \mathbb Z. 
$$
We parametrize the circle $\mathcal{C}_n$ as $n+ne^{2ix}=2n\cos(x)e^{ix}$ for $x\in [-\pi/2, \pi/2]$. N\"orlund \cite[p.~387]{norlund} proved that 
$$
\left \vert \frac{(n-1)!}{(z-1)\cdots (z-n)} \right\vert \le  c_1(n) e^{-2n\cos(x)\psi(x)}, \quad  z=2n\cos(x)e^{ix}\in \mathcal{C}_n,
$$
where $c_1(n)>0$ is bounded above by some polynomial in $n$ and 
$$
\psi(x):=\cos(x)\log(2\cos(x))+x\sin(x).
$$
(See also the proof given in \cite{rivoalwelter}.)
The minimum on $[-\pi/2,\pi/2]$ of $\psi(x)$ is $\log(2)$ at $x=0$. 
 
We have 
$$
\vert F(z-1)\vert\le c\cdot e^{\rho\Re(z-1)} =ce^{-\rho}\cdot e^{2n\rho\cos(x)^2}, \quad  z=2n\cos(x)e^{ix}\in \mathcal{C}_n.
$$ 
Hence
$$
\vert b_{n-1}\vert \le c_2(n) \max_{x\in [-\pi/2,\pi/2]} 
e^{2n \cos(x)(\rho\cos(x)-\psi(x))}
$$ 
where $c_2(n)>0$ is bounded above by some polynomial in $n$. 
Notice that $0\le \cos(x)\le 1$ on $[-\pi/2,\pi/2]$. Recall also that $\rho> 0$. Hence if $x$ is such that $\rho\cos(x)-\psi(x)\ge 0$, then 
$$
\cos(x)(\rho\cos(x)-\psi(x))\le \rho-\psi(x)\le \rho-\log(2), 
$$
whereas if $x$ is such that $\rho\cos(x)-\psi(x)\le 0$, then $\cos(x)(\rho\cos(x)-\psi(x))\le 0$. Therefore, for all $x\in [-\pi/2,\pi/2]$, 
$$ 
e^{2\cos(x)(\rho\cos(x)-\psi(x))} \le \max(1,e^{2(\rho-\log(2))})
$$
and
$
\vert b_{n-1}\vert \le c_2(n)  \max(1,e^{2(\rho-\log(2))})^n.
$
Since $2(\rho-\log(2)) <1$,  it follows that
$$
\limsup_{n\to +\infty} \vert b_n\vert^{1/n}<e.
$$
But because $(a_n)_{n\ge0}$ is a primary pseudo-polynomial, we know by Theorem \ref{theo:1}$(ii)$ that if $b_n$ is not eventually equal to $0$ then 
$$
\liminf_{n\to +\infty} \vert b_n\vert^{1/n} \ge e.
$$
This implies that $b_n$ is indeed eventually equal to $0$, thus that there exist $P(X) \in \mathbb Q[X]$ and an integer $N\ge 0$ such that $a_n=P(n)$ for all $n\ge 0$. 

Consider now the function $g(z):=F(z)-P(z)$ which is analytic in $\Re(z)>-2$ (at least), and such that $g(n)=0$ for every integer $n\ge 0$. Moreover, since $\rho>0$, there exists a constant $d>0$ such that $\vert g(z)\vert \le d\cdot \exp(\rho \vert z\vert)$ for any $z$ such that $\Re(z)>0$. Since $\rho<\frac{1}{2}+\log(2)<\pi$, we can then apply a classical result of Carlson (see Hardy \cite[p.~328]{hardy}) and deduce that $g(z)=0$ identically. Hence $F(z)$ reduces to a polynomial function in $\mathbb Q[z]$. This completes the proof of Theorem~\ref{theo:3}.

\section{Construction of genuine primary pseu\-do-po\-ly\-nomials} 
\label{sec:prooftheo51}

We conclude this paper by presenting a method to construct a non-polynomial primary pseudo-po\-ly\-nomial starting from a given primary pseudo-polynomial $(a_n)_{n\ge 0}$ such that $a_0=1$. The justification of the method uses a non-trivial property satisfied by $E$-functions.

Let as usual $(b_n)_{n\ge 0}$ be the binomial transform \eqref{eq:binomialtransform} of $(a_n)_{n\ge 0}$. Let 
$$
F_b(x):=\sum_{n=0}^\infty \frac{b_n}{n!}x^n. 
$$
We assume that $a_0=1$, so that $b_0=1$ as well.  We  define the sequence $(c_n)_{n\ge 0}$ formally by 
$$
F_c(x):=\sum_{n=0}^\infty \frac{c_n}{n!}x^n=\frac{1}{F_b(x)}. 
$$
Let us now define $(u_n)_{n\ge 0}$ as the inverse binomial transform \eqref{eq:involution}  of $(c_n)_{n\ge 0}$, \textit{i.e.} 
$$
u_n:=\sum_{k=0}^n \binom{n}{k}c_k.
$$ 
Then we have the following.

\begin{theo} \label{theo:51} Let $(a_n)_{n\geq 0}\in\mathbb{Z}^{\mathbb{N}}$ be a primary pseudo-polynomial such that $a_0=1$. Then, $(u_n)_{n\ge 0}\in \mathbb Z^{\mathbb{N}}$ is a primary pseudo-polynomial. Moreover, 
assuming also that $f_a(x)$ is a $G$-function, if $\limsup_n \vert u_n\vert^{1/n}<e$, then $a_n=u_n=1$ for all $n\ge 0$.
\end{theo}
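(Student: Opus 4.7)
The plan is to split the claim into two parts. For the \emph{integrality and primary pseudo-polynomial property of $(u_n)$}, expanding $F_b(x)F_c(x)=1$ gives $c_0=1$ and the recurrence
\[
c_n = -\sum_{k=1}^n \binom{n}{k}\,b_k\,c_{n-k}, \qquad n\ge 1,
\]
so that $c_n\in\mathbb Z$ by induction, and consequently $u_n=\sum_{k=0}^n\binom{n}{k}c_k\in\mathbb Z$. In view of Theorem~\ref{theo:1}(i), it remains to prove that $P_n\mid c_n$ for every $n\ge 0$, which I would do by strong induction. Given a prime $p\le n$, each term of the recurrence falls into one of three cases: either $p\le k$, in which case $p\mid P_k\mid b_k$ by Theorem~\ref{theo:1}(i) applied to $(a_n)_{n\ge 0}$; or $p\le n-k$, in which case $p\mid P_{n-k}\mid c_{n-k}$ by the induction hypothesis; or $k<p$ and $n-k<p$, which forces $p\le n<2p$, so writing $n=p+s$ with $0\le s<p$ and noting $k>s$, Lucas' congruence yields $\binom{n}{k}\equiv \binom{s}{k}\equiv 0\pmod p$.

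For the conclusion $a_n=u_n=1$, the plan is to recast the construction in terms of the exponential generating functions $\widehat{F}_a(x)=\sum_{n\ge 0}a_n x^n/n!$ and $\widehat{F}_u(x)=\sum_{n\ge 0}u_n x^n/n!$. Standard EGF identities for the (inverse) binomial transform give $\widehat{F}_b(x)=e^{-x}\widehat{F}_a(x)$ and $\widehat{F}_u(x)=e^{x}\widehat{F}_c(x)$, and combining these with $\widehat{F}_c=1/\widehat{F}_b$ yields the central identity
\[
\widehat{F}_u(x)\,\widehat{F}_a(x)=e^{2x}.
\]
By definition, $f_a$ being a $G$-function makes $\widehat{F}_a$ an $E$-function. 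Since $(u_n)$ is a primary pseudo-polynomial satisfying $\limsup|u_n|^{1/n}<e$, the Perelli--Zannier Theorem shows $f_u$ is a $G$-function, so $\widehat{F}_u$ is an $E$-function as well. The identity above then gives $1/\widehat{F}_a(x)=e^{-2x}\widehat{F}_u(x)$, a product of two $E$-functions, hence an $E$-function.

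The main obstacle, and the ``non-trivial property of $E$-functions'' the authors refer to, is the rigidity statement: if both $f$ and $1/f$ are $E$-functions, then $f(x)=c\,e^{\alpha x}$ for some algebraic $c,\alpha$. This I would derive from the fact that $E$-functions are entire of exponential type; the reciprocal being entire forces $f$ to be zero-free, and Hadamard factorisation then pins down the exponential form, with $c=f(0)$ and $\alpha=f'(0)/f(0)$ automatically algebraic. Granted this, applied to $\widehat{F}_a$, the normalisation $a_0=1$ yields $c=1$, so $a_n=\alpha^n$ with $\alpha=a_1\in\mathbb Z$. The primary pseudo-polynomial congruence $a_p\equiv a_0=1\pmod p$ combined with Fermat's little theorem $\alpha^p\equiv\alpha\pmod p$ forces $\alpha\equiv 1\pmod p$ for every prime $p$, hence $\alpha=1$. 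Thus $a_n=1$ for all $n$, from which $b_n=\delta_{n,0}$, $\widehat{F}_b=\widehat{F}_c=1$, and finally $u_n=1$ for all $n$, as claimed.
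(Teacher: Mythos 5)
Your proof is correct and follows the same overall strategy as the paper, with two small but genuine variants worth noting. In the divisibility argument for $c_n$, where the paper disposes of the case $p>\max(k,n-k)$ by observing that $p$ divides $n!$ but neither $k!$ nor $(n-k)!$, you instead invoke Lucas' congruence on $\binom{n}{k}=\binom{p+s}{0\cdot p+k}$; both are valid and of comparable length. The more substantive variant is in the endgame: the paper applies the unit-of-$E$-functions rigidity to $F_b$, obtains $b_n=\beta\alpha^n$, and concludes $\alpha=0$ from the divisibility $P_n\mid b_n$; you instead apply it to $\widehat F_a$ via the clean identity $\widehat F_u\,\widehat F_a=e^{2x}$, obtain $a_n=\alpha^n$, and conclude $\alpha=1$ by combining $a_p\equiv a_0\pmod p$ with Fermat's little theorem. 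Your route is arguably slightly more elegant since it uses only the defining congruence of primary pseudo-polynomials, whereas the paper's route relies on the derived divisibility of the binomial transform; the final conclusion $u_n=1$ is reached identically. You also supply a self-contained Hadamard-factorisation proof of the $E$-function unit rigidity where the paper cites Andr\'e, but the paper sketches exactly this argument in a footnote, so this is not a new idea.
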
 
Consequently, if $f_a(x)$ is a $G$-function not equal to $(1-x)^{-1}$, 
then $(u_n)_{n\ge 0}\in \mathbb Z^{\mathbb{N}}$ is a primary pseudo-polynomial such that $\limsup_n \vert u_n\vert^{1/n}\ge e$ (hence not a polynomial). We explain after the proof of the theorem why $u_n$ grows like $n!$ in this case.

So far, the assumption that $f_a(x)$ is a $G$-function is known to be satisfied only when $\limsup_n \vert a_n\vert^{1/n}<e$ (when the Perelli-Zannier Theorem can be applied), which in turn implies that $(a_n)_{n\geq 0}$ should be a polynomial by Conjecture~\ref{conj:ruzsaext}. Hence, in 
practice the second assertion of Theorem~\ref{theo:51} is useful only when $(a_n)_{n\geq 0}$ is already known to be a polynomial in which case 
$F_b(x)$ $\in \mathbb Q[x]$. For instance, if $a_n=n+1$, then 
\begin{equation}\label{eq:1312}
\sum_{n=0}^\infty \frac{c_n}{n!}x^n=\frac{1}{1+x}\quad\textup{and}\quad u_n=\sum_{k=0}^n (-1)^{k}\binom{n}{k}k!.
\end{equation}

\begin{proof}[Proof of Theorem \ref{theo:51}]

We first prove that for all $n\ge 0$, $c_n\in \mathbb Z$ and for every prime $p\le n$, $p$ divides $c_n$. 
By definition of the $c_n$'s, we have
$$
1=\left(\sum_{n=0}^\infty\frac{b_n}{n!}x^n\right)\left(\sum_{n=0}^\infty\frac{c_n}{n!}x^n\right)=\sum_{n=0}^\infty\frac{1}{n!}\left(\sum_{k=0}^n\binom{n}{k}b_kc_{n-k}\right)x^n,
$$
so that $c_0=1$ and, for every integer $n\geq 1$, we have
$$
\sum_{k=0}^n\binom{n}{k}b_kc_{n-k}=0.
$$
This yields the recursive relation (because $b_0=1$)
$$
c_n=-\sum_{k=1}^n\binom{n}{k}b_kc_{n-k},\quad(n\geq 1).
$$
It follows that, for all $n\geq 0$, $c_n$ is an integer. 

Let $n$ be a positive integer such that, for every positive integer $m< n$ and every prime number $p\leq m$, $p$ divides $c_m$. Consider a prime number $p\leq n$ and an integer $k$ in $\{1,\dots,n\}$. If $p\leq k$ then $p$ divides $b_k$. If $p\leq n-k$, then $p$ divides $c_{n-k}$. If $p>\max(k,n-k)$, then $p$ divides $\binom{n}{k}$ because $p$ divides $n!$ but neither $k!$ nor $(n-k)!$. In all cases, $p$ divides $\binom{n}{k}b_kc_{n-k}$, so that $p$ divides $c_n$. By strong induction on $n$, it follows that, for every integer $n\geq 0$ and every prime $p\leq n$, $p$ divides $c_n$ (this property holds trivially if $n=0$ or $1$). By Theorem~\ref{theo:1}$(i)$, the sequence $(u_n)_{n\geq 0}\in\mathbb{Z}^{\mathbb N}$ is a primary pseudo-polynomial. Notice that so far we had no need to assume that $f_a$ 
is a $G$-function.

We  now complete the proof of Theorem \ref{theo:51} when $f_a$ is also a $G$-function. We first observe that $F_b(x)$ is an $E$-function: indeed, $f_b(x)$ is a $G$-function because $$
f_b(x):=\sum_{n=0}^\infty b_n x^n=\frac{1}{1+x}f_a\left(\frac{x}{1+x}\right)
$$ 
and $f_a(x)$ is a $G$-function.~(\footnote{Given a $G$-function $f(x)$ and $\alpha(x)$ an algebraic function over $\Qbar(x)$ regular at $x=0$,  $f(x\alpha(x))$ is a $G$-function.})
Let us assume that $\limsup_n \vert u_n\vert^{1/n}<e$. 
By the Perelli-Zannier Theorem quoted in the Introduction, $f_u(x):=\sum_{n=0}^\infty u_n x^n$ is a $G$-function. From the equation $c_n:=\sum_{k=0}^n (-1)^{n-k} \binom{n}{k}u_k$ ($\forall n\ge0)$, we deduce that 
$$
f_c(x):=\sum_{n=0}^\infty c_n x^n=\frac{1}{1+x}f_u\left(\frac{x}{1+x}\right)
$$ 
is also a $G$-function. 
Hence $F_c(x):=1/F_b(z)$ is also an $E$-function. 
Therefore, $F_b(x)$ is a unit of the ring of $E$-functions, \textit{i.e.} 
it is of the form $\beta e^{\alpha z}$ with $\alpha\in \Qbar$ and $\beta\in \Qbar^*$ by \cite[p. 717]{andre} (see also Footnote 4). Therefore, $b_n= \beta \alpha^n$ for all $n\ge 0$, with the usual convention that $\alpha^0=1$ if $\alpha=0$. Since all primes $p\le n$ divide $b_n$, we deduce that $\alpha=0$ is the only possibility. Hence 
$1=b_0=\beta$ and $b_n=0$ for all $n\ge 0$, so that $a_n=1$ for all $n\ge 0$. It now remains to observe the following facts: 
``$a_n=1$ for all $n\ge 0$'' is equivalent to ``$b_0=1$ and $b_n=0$ 
for all $n\ge 1$'' which is equivalent to ``$c_0=1$ and $c_n=0$ for all $n\ge 1$'', which in turn is equivalent to ``$u_n=1$ for all $n\ge 0$''.
\end{proof}

To conclude, let us explain how to obtain the asymptotic behavior of $u_n$ as $n\to +\infty$. 
Since 
$$
F_u(x):=\sum_{n=0}^\infty\frac{u_n}{n!}x^n=e^{x}F_c(x)=e^{x}/F_b(x),
$$ 
the asymptotic behavior of $u_n/n!$ is determined by the zeroes of smallest modulus of $F_b(x)$, when it has at least one. Notice that 
an $E$-function $F(x)$ with no zero in $\mathbb C$ must be of the form $\beta e^{\alpha x}$ with $\alpha\in \Qbar$ and $\beta\in \Qbar^*$. Indeed, an $E$-function is an entire function satisfying $\vert F(x)\vert \ll e^{\rho\vert x\vert}$ for some $\rho>0$, so that Hadamard's factorization theorem yields $F(x)=\beta x^m e^{\alpha x}\prod_{j\ge 1}\big((1-\frac{x}{x_j})e^{x/x_j}\big)$ where the $x_j$'s are the zeroes of $F(x)$, $m\in \mathbb N$ and $\alpha, \beta\in \mathbb C$.~(\footnote{This argument also explains the characterization of the units of the ring of $E$-functions: we simply have to observe that if an $E$-function $F(x)$ is a unit, \textit{i.e.} that $1/F(x)$ is an $E$-function, then it does not vanish anywhere on $\mathbb {C}$ because an $E$-function is an entire function.}) Therefore, a ``no zero'' assumption implies that $F(x)=\beta e^{\alpha x}$ with $\beta \neq 0$, and since $F(x)\in \Qbar[[x]]$, $\alpha, \beta$ must be 
in $\Qbar$. Coming back to $F_b(x)$, we have seen during the proof of Theorem \ref{theo:51} that this case implies that $\alpha=0,\beta=1$, hence that $f_a(x)=(1-x)^{-1}.$

Therefore, assuming that $f_a(x)$ is a $G$-function different of $(1-x)^{-1}$, the $E$-function $F_b(x)$ has at least one zero. Let $x_1, \ldots, x_m$ denote the zeroes of $F_b(x)$ of the same modulus which is the smallest amongst all modulus of the zeroes. Classical transfer theorems in \cite[Chapter VI]{bible} enable to deduce the asymptotic behavior of $u_n$. For instance, if the $x_j$'s are simple zeroes of $F_b(x)$, then 
$$
u_n=n!\sum_{j=1}^m \frac{e^{x_j}}{F_b'(x_j)}\frac{1+o(1)}{x_j^{n}}.
$$
This is coherent with \eqref{eq:1312} above (where $F_b(x)=1+x$)  because we can rewrite it as 
$$
u_n =  (-1)^n n!\sum_{k=0}^n\frac{(-1)^{n-k}}{(n-k)!}\sim \frac{(-1)^n}{e} n!, \quad n\to +\infty.
$$
Because of the different arguments of the $x_j$'s, oscillations can occur. In presence of zeroes of $F_b(x)$ of higher multiplicities, similar but more complicated  expressions can be given. Finally, even though $F_b(x)/e^{x}$ is an $E$-function, we don't expect $e^x/F_b(x)$ to be $D$-finite in general,~(\footnote{A classical result of Harris-Sibuya~\cite{harrissibuya} ensures that if $y$ and $1/y$ are both holonomic, then $y'/y$ is 
an algebraic function.}) but this is obviously the case if $F_b(x)$ is a polynomial.

\medskip

\noindent {\bf Acknowledgements.} Both authors have partially been funded 
by the ANR project {\em De Rerum Natura} (ANR-19-CE40-0018). This project 
has received funding from the European Research Council (ERC) under the European Union’s Horizon 2020 research and innovation programme under the Grant Agreement No 648132.

\medskip

\noindent E. Delaygue, Institut Camille Jordan, 
Universit\'e Claude Bernard Lyon 1, 
43 boulevard du 11 novembre 1918,
69622 Villeurbanne cedex, France

\medskip

\noindent T. Rivoal, Institut Fourier, CNRS et Universit\'e Grenoble Alpes, CS 40700, 38058 Grenoble cedex 9, France.

\medskip

\noindent Keywords: Primary pseudo-polynomial, Algebraic series, $G$-functions, $E$-functions,  Newton Interpolation.

\medskip

\noindent MSC 2020:  11A41, 11B50 (Primary); 11B37, 33E20, 41A05 (Secondary).


\begin{thebibliography}{1}
\bibitem{andre} Y. Andr\'e, {\em S\'eries Gevrey de type arithm\'etique I. Th\'eor\`emes de puret\'e et de dualit\'e}, Annals of Math. {\bf 151} 
(2000), 705--740.
\bibitem{christol} G. Christol, 
{\em Fonctions hyperg\'eom\'etriques born\'ees}, Groupe de travail d'analyse ultram\'etrique, Tome {\bf 14} (1986-1987) p. 1--16. 
\bibitem{firi} S. Fischler, T. Rivoal, {\em Linear independence of values 
of $G$-functions}, Journal of the EMS  {\bf 22}.5 (2020), 1531--1576.
\bibitem{bible} P. Flajolet, R. Sedgewick, {\em Analytic Combinatorics}, Cambridge University Press, 2009.
\bibitem{furstenberg} H. Furstenberg, {\em Algebraic functions over finite fields}, J. Algebra {\bf 7} (1967), 271--277.
\bibitem{hall} R. R. Hall, {\em On pseudo-polynomials}, Mathematika {\bf 18} (1971), 71--77.
\bibitem{hardy} G. H. Hardy, {\em On two theorems of F. Carlson and S. Wigert},  Acta Math. {\bf 42} (1920), 327--339.
\bibitem{harrissibuya} W. A. Harris, Y. Sibuya, {\em The reciprocals of solutions of linear ordinary differential
equations}, Adv. Math. {\bf 58} (1985), 119--132.
\bibitem{kow} E. Kowalski and K. Soundararajan,  {\em Equidistribution from the Chinese Remainder Theorem}, 
{\tt https://arxiv.org/abs/2003.12965}
\bibitem{kup} V. Kuperberg, {\em A note on pseudo-polynomials divisible only by a sparse set of primes}, {\tt https://arxiv.org/abs/2006.02527}
\bibitem{lewin} L. Lewin, {\em Polylogarithms and Associated Functions}, North-Holland, xiii+359 pages, 1981.
\bibitem{norlund} N. E. N\"orlund, 
{\em Sur les formules d'interpolation de Stirling et de Newton}, 
Ann. sci. ENS {\bf 39} (1922), 343--403. 
\bibitem{pz} A. Perelli, U. Zannier, {\em Su un teorema di P\'olya}, Boll. UMI {\bf 5}, 18/A (1981).
\bibitem{pz2} A. Perelli, U. Zannier, {\em On recurrent mod $p$ sequences}, Crelle's Journal {\bf 348} (1984), 135--146.
\bibitem{polya} G. P\'olya, G. Szeg\"o, Problems and theorems in analysis. II. Theory of functions, zeros, polynomials, determinants, number theory, geometry. Translated from the German by C. E. Billigheimer. Reprint of the 1976 English translation. Classics in Mathematics. Springer-Verlag, Berlin (1998). xii+392 pp.
\bibitem{rivoalwelter} T. Rivoal, M. Welter, {\em Sur les fonctions arithm\'etiques non enti\`eres}, Israel J. Math. {\bf 169}.1 (2009), 155--179.
\bibitem{ruzsa} I. Z. Ruzsa, {\em On congruence preserving functions}, Mat. Lapok {\bf 22} (1971), 125--134. 
\bibitem{shidlovskii} A. B. Shidlovskii, {\em Transcendental Numbers}, W. 
de Gruyter Studies in Mathematics {\bf 12}, 1989.
\bibitem{VDPconj} A. J. van der Poorten, {\em Power series representing algebraic functions}, Séminaire de Théorie des Nombres, Paris, 1990--91, 241--262, Progr. Math., \textbf{108}, Birkh\"auser Boston, Boston, MA (1993).
\bibitem{waldschmidt} M. Waldschmidt, {\em Integer-valued functions, Hurwitz functions
and related topics: a survey}, preprint 2020, 27 pages,   submitted to the proceedings of the 31th Journ\'ees Arithm\'etiques (Istanbul, June 2019).
\bibitem{zannier} U. Zannier, {\em On periodic mod $p$ sequences and $G$-functions (On a conjecture of Ruzsa)}, manuscripta math. {\bf 90} (1996), 
391--402.

\end{thebibliography}
\end{document}